\newcommand{\C}{\mathbb{C}}
\newcommand{\Ccc}{\mathbb{C}^3}
\newcommand{\Cn}{\mathbb{C}^n}
\newcommand{\D}{\mathbb{D}}
\newcommand{\Gg}{\mathbb{G}_2}
\newcommand{\Ggg}{\mathbb{G}_3}
\newcommand{\G}{\widetilde{\mathbb{G}}_3}
\newcommand{\gn}{\mathbb{G}_n}
\newcommand{\Gn}{\widetilde{\mathbb{G}}_n}
\newcommand{\gamn}{\Gamma_n}
\newcommand{\Gamn}{\widetilde{\Gamma}_n}
\newcommand{\vp}{\psi}
\newcommand{\M}{\mathcal{M}}
\newcommand{\be}{\beta}
\newcommand{\lm}{\lambda}
\newcommand{\q}{\quad}
\newcommand{\qq}{\qquad}
\newcommand{\n}{\lVert}
\newcommand{\lf}{\left(}
\newcommand{\ls}{\left\{}
\newcommand{\lt}{\left[}
\newcommand{\rf}{\right)}
\newcommand{\rs}{\right\}}
\newcommand{\rt}{\right]}
\newcommand{\df}{\dfrac}
\newcommand{\nj}{{n \choose j}}
        \def\textmatrix#1&#2\\#3&#4\\{\bigl({#1 \atop #3}\ {#2 \atop #4}\bigr)}
        \def\dispmatrix#1&#2\\#3&#4\\{\left({#1 \atop #3}\ {#2 \atop #4}\right)}
        \newtheorem{defn}{Definition}[section] % This Section declares
        \newtheorem{thm}[defn]{Theorem}
        \newtheorem{rem}[defn]{Remark}
\begin{document}
\title[A Schwarz lemma for two domains and complex geometry]
{A Schwarz lemma for two families of domains and complex geometry}

\author[Sourav Pal]{Sourav Pal}
\address[Sourav Pal]{Mathematics Department, Indian Institute of Technology Bombay, Powai, Mumbai - 400076, India.} 
\email{sourav@math.iitb.ac.in}

\author[Samriddho Roy]{Samriddho Roy}
\address[Samriddho Roy]{Mathematics Department, Indian Institute of Technology Bombay, Powai, Mumbai - 400076, India.} \email{samriddhoroy@gmail.com}

\keywords{Symmetrized polydisc, Extended symmetrized polydisc, Complex geometry,  Schwarz lemma, Interpolating function}

\subjclass[2010]{30C80, 32A10, 32A60, 32E20, 32E30}

\thanks{The first author is supported by the Seed Grant of IIT Bombay, the CPDA and the INSPIRE Faculty Award (Award No. DST/INSPIRE/04/2014/001462) of DST, India. The second author is supported by a Ph.D fellowship from the University Grand Commission of India.}

\begin{abstract}
We make sharp estimates to obtain a Schwarz type lemma for the symmetrized polydisc $\gn$ and for the extended symmetrized polydisc $\Gn$. We explicitly construct an interpolating function under certain condition. To do so, we followed the methods described in \cite{Young-LMS}. Also we find a few geometric interplay between the members of the family $\Gn$ and its closure $\widetilde{\Gamma}_n$.
\end{abstract}

\maketitle

\section{Introduction}

\noindent This article is a sequel of \cite{pal-roy 2}. Being motivated by the inspiring works due to Bharali, Costara, Edigarian, Kosinski, Nikolov, Zwonek \cite{bharali, costara1, edi-zwo, KZ, NN, zwonek4, zwonek5, NW, zwonek3} and few others (see references there in), we descend one more step into the depth of studying complex geometry and function theory of the symmetrized $n$-disk $\gn$ for $n\geq 3$. The \textit{symmetrized} $n$-\textit{disk} $\mathbb G_n$ or simply the \textit{symmetrized polydisc}, which consists of symmetric polynomials, is defined by
\begin{equation*}
\mathbb G_n =\left\{ \left(\sum_{1\leq i\leq n} z_i,\sum_{1\leq
i<j\leq n}z_iz_j,\dots,
\prod_{i=1}^n z_i \right): \,|z_i|< 1, i=1,\dots,n \right \}\,.
\end{equation*}
This domain arises in the famous $\mu$-synthesis problem, which is a part of the theory of robust control of systems comprising of interconnected electronic devices whose outputs
are linearly dependent on the inputs. Given a \textit{structure} $E$, which is a linear subspace of $\mathcal M_{m \times n}(\mathbb C)$, the space of all $m \times n$ matrices, the functional
\begin{align*}
& \mu_E(B)\\
& := (\text{inf} \{ \|X \|: X\in E \text{ and } (I-BX)
\text{ is singular } \})^{-1}, \; B\in \mathcal M_{ m \times
n}(\mathbb C),
\end{align*}
is called a \textit{a structured singular value}. If $m=n$ and if $E$ is the space of all scalar multiples of the identity matrix $I$, then $\mu_E(B)$ is equal to the spectral radius $r(B)$. Also if $E=\mathcal M_{m \times n}(\mathbb C)$, then $\mu_E (B)$ is precisely the operator norm $\|B\|$. Naturally if $E$ is any linear subspace of $\mathcal M_n(\mathbb C)$ that contains the identity matrix, then $r(B)\leq \mu_E(A) \leq \|B\|$. For the control-theory motivations behind $\mu_E$, we refer to the pioneering work of Doyle \cite{JCD}. The $\mu$-synthesis problem aims to find an analytic function $f$ from the open unit disk $\mathbb D$ of the complex plane $\mathbb C$ to $\mathcal M_{ m \times n}(\mathbb C)$ subject
to a finite number of interpolation conditions such that
$\mu_E(f(\lambda))<1$, for all $\lambda \in \mathbb D$. If  $E=\{
\lambda I:\lambda \in \mathbb C \} \subseteq \mathcal M_n (\mathbb
C)$, then $\mu_E (B)=r(B)<1$ if and only if
$\pi_n(\nu_1,\dots, \nu_n) \in \mathbb G_n$ (see
\cite{costara1}); here $\nu_1, \dots , \nu_n$ are
eigenvalues of $B$ and $\pi_n$ is the symmetrization map defined on $\mathbb C^n$ by
\[
\pi_n(z)=\left(s_1(z),\dots, s_{n-1}(z), p(z)\right),\; z=(z_1,\dots ,z_n)\,,
\]
where
\[
s_i(z)= \sum_{1\leq k_1 \leq k_2 \cdots \leq k_i \leq n}
z_{k_1}\cdots z_{k_i} \quad \text{ and }
p(z)=\prod_{i=1}^{n}z_i\,.
\]
It is merely mentioned that $\pi_n$ is a proper holomorphic map and $\pi_n(\D^n)=\mathbb G_n$, where $\mathbb D^n$ is the open polydisc defined by
\[
\mathbb D^n =\{ (z_1,\dots,z_n) : |z_i|<1, i=1,\dots,n \}.
\]
The \textit{closed symmetrized polydisc} $\Gamma_n$, which is the closure of $\gn$, is given by
%----Definition of closed symmetrized polydisc--------- %
\begin{align*}
\Gamma_n  : &=\left\{ \left(\sum_{1\leq i\leq n} z_i,\sum_{1\leq
i<j\leq n}z_iz_j,\dots, \prod_{i=1}^n z_i \right): \,|z_i|\leq 1,
i=1,\dots,n \right \} \\
&= \pi_n(\overline{\mathbb D^n}).
\end{align*}
The set $\Gamma_n$ is polynomially convex but not convex (see \cite{edi-zwo}). It is evident from the definition that $\mathbb G_1 =\mathbb D$ and below we provide an explicit form of $\Gg$ and $\Ggg$ for the convenience of the readers.
\begin{gather*}
\Gg = \ls (z_1 + z_2, z_1z_2) : z_1, z_2 \in \D \rs,\\
\Ggg = \ls (z_1 + z_2 + z_3,\; z_1z_2 + z_2z_3 + z_3z_1,\;
z_1z_2z_3) : z_1,z_2,z_3 \in \D \rs.
\end{gather*}
The symmetrized polydisc has attracted considerable attentions in past two decades because of its rich function theory \cite{ALY12, ALY-14, bharali 07, costara1, KZ, NTD, PZ}, complex geometry \cite{costara, edi-zwo, NN, zwonek4, zwonek5}, associated operator theory \cite{ay-jfa, tirtha-sourav, tirtha-sourav1, BSR, sourav, sourav3, pal-shalit}. An interested reader can also see the articles referred there.\\

The classical Schwarz lemma in one variable is stated in the following way.
\begin{thm}\label{classical-S}
    Let $f$ be an analytic function on $\mathbb{D}$ such that $|f(z)| \leq 1$, for all $z \in \mathbb{D}$ and $f(0)=0$. Then
    \begin{enumerate}
        \item[(a)] $|f(z)|\leq |z|$, for all $z \in \mathbb{D}$,
        \item[(b)] $|f'(0)|\leq 1$.
    \end{enumerate}
Moreover, if $|f'(0)| = 1$ or if $|f(z_0)|= |z_0|$ for some $z_0 \neq 0$, then there is a constant $c$ such that $|c| = 1$ and $f(w) = cw$ for all $w \in \mathbb{D}$.
\end{thm}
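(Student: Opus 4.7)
The plan is to reduce the statement to the maximum modulus principle by introducing the auxiliary quotient $g(z) = f(z)/z$. Since $f$ is analytic on $\D$ with $f(0) = 0$, expanding in a Taylor series gives $f(z) = \sum_{k \geq 1} a_k z^k$, so $g(z) = \sum_{k \geq 1} a_k z^{k-1}$ defines an analytic extension of $f(z)/z$ to all of $\D$, with $g(0) = a_1 = f'(0)$. Alternatively, one can invoke Riemann's removable singularity theorem, since $g$ is bounded in a neighborhood of $0$.

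Next, I would fix $r \in (0,1)$ and apply the maximum modulus principle on the closed subdisk $\{|z| \leq r\}$. On the boundary $|z| = r$ we have $|g(z)| = |f(z)|/r \leq 1/r$, so by the maximum modulus principle the same bound holds throughout $\{|z| \leq r\}$. Letting $r \to 1^{-}$ for any fixed $z \in \D$ produces the uniform bound $|g(z)| \leq 1$ on $\D$. Evaluating at $z \neq 0$ yields part (a), $|f(z)| \leq |z|$, and evaluating at $z = 0$ yields part (b), $|f'(0)| = |g(0)| \leq 1$.

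For the equality statement, the hypothesis $|f'(0)| = 1$ translates to $|g(0)| = 1$, and the hypothesis $|f(z_0)| = |z_0|$ for some $z_0 \neq 0$ gives $|g(z_0)| = 1$. In either case, $|g|$ attains its supremum over $\D$ at an interior point, so the strong form of the maximum modulus principle forces $g$ to be constant, say $g \equiv c$ with $|c| = 1$. This gives $f(w) = cw$ for every $w \in \D$.

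There is no real obstacle here; the proof is standard. The only point requiring mild care is justifying analyticity of $g$ at the origin, which is immediate from the vanishing of $f$ at $0$ together with the Taylor expansion (or Riemann's removable singularity theorem). Everything else is a direct appeal to the maximum modulus principle.
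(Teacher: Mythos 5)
Your proof is correct: the quotient $g(z)=f(z)/z$ is analytic on $\D$ by the removable singularity argument, the bound $|g|\leq 1/r$ on $|z|=r$ followed by $r\to 1^-$ gives both (a) and (b), and the equality case follows from the strong maximum modulus principle. The paper states this classical Schwarz lemma without proof, and your argument is precisely the standard one it implicitly relies on, so there is nothing to add.
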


In \cite{pal-roy 2}, we obtained an analogue of the first part of Theorem \ref{classical-S} for the symmetrized polydisc. The main aim of this article is to continue the same program to find an analogous part-(b) for $\gn$ of the classical Schwarz lemma.\\

To study the complex geometry of $\gn$ (and $\Gamma_n$) more deeply and for proving a Schwarz lemma for $\gn$, we introduced a new family of domains in \cite{pal-roy 2}, which we named \textit{extended symmetrized polydisc} and defined as
%----Definition of extended symmetrized polydisc--------- %
\begin{align*}
\widetilde{\mathbb G}_n := \Bigg\{ (y_1,\dots,y_{n-1}, q)\in & \C^n :\: q \in \D, \:  y_j = \be_j + \bar \be_{n-j} q, \: \beta_j \in  \C \: \text{ and }\\
& \q |\beta_j|+ |\beta_{n-j}| < {n \choose j}\,,\q j=1,\dots, n-1 \Bigg\}.
\end{align*}
We called the closure of $\Gn$, the \textit{closed extended symmetrized polydisc} and denoted it by $\widetilde{\Gamma}_n$. We proved in \cite{pal-roy 2} that
\begin{align*}
\widetilde{\Gamma}_n := \Bigg\{ (y_1,\dots,y_{n-1}, q)\in & \C^n :\: q \in \D, \:  y_j = \be_j + \bar \be_{n-j} q, \: \beta_j \in  \C \: \text{ and }\\
& \q |\beta_j|+ |\beta_{n-j}| \leq {n \choose j}, \q j=1,\dots, n-1 \Bigg\}.
\end{align*}
The purpose of introducing the family $\Gn$ was to make a few sharp estimates which provides a Schwarz lemma for $\gn$, \cite{pal-roy 2}. Also we obtained a variety of characterizations for the points in $\gn$ and $\Gamma_n$ via a similar set of characterizations for $\Gn$ and $\widetilde{\Gamma}_n$ respectively, \cite{pal-roy 2}.\\

In \cite{costara1}, Costara showed that 
\[
\gn = \big\{ (s_1,\dots, s_{n-1}, p) : p\in \D,\;  s_j=\be_j + \bar\be_{n-j}p \; \& \; (\be_1, \dots, \be_{n-1}) \in \mathbb G_{n-1} \big\}
\]
and 
\[
\gamn = \big\{ (s_1,\dots, s_{n-1}, p) : p\in \overline\D,\; s_j=\be_j + \bar\be_{n-j}p \; \& \;(\be_1, \dots, \be_{n-1}) \in \Gamma_{n-1} \big\}.
\]
It is obvious that if $\lf \be_1,\dots, \be_{n-1} \rf \in \mathbb G_{n-1}$, then $|\be_j| + |\be_{n-j}| < \nj$. Therefore, it follows that $\gn \subseteq \Gn$. In fact, $\mathbb G_2 = \widetilde{\mathbb G}_2$ but $\gn \subsetneq \Gn$ for $n \geq 3$ (see \cite{pal-roy 2}, Lemma 3.0.2). \\

We introduced $n-1$ fractional linear transformations $\Phi_1,\dots ,\Phi_{n-1}$ and with their help we made some sharp estimates to find necessary conditions for the existence of an interpolating function from $\mathbb D$ to $\Gn$ and since $\gn \subseteq \Gn$, the estimates became necessary for a Schwarz lemma for $\gn$ (see \cite{pal-roy 2}). Since the maximum modulus of each co-ordinate of a point in $\Gn$ does not exceed that of a point in $\gn$, these estimates are sharp for $\gn$ too. Moreover, the functions $\Phi_1,\dots , \Phi_{n-1}$ are specially designed for $\Gn$ and they characterize the points in $\Gn$ and $\widetilde{\Gamma}_n$. \\

In this article, we first prove an analogue of part-(b) of Theorem \ref{classical-S} for $\Gn$, which is Theorem \ref{Schwarz-lemma-Gn-2} and it is one of the main results of this paper. As a consequence the desired Schwarz lemma for $\gn$ (Theorem \ref{Schwarz-lemma-gn-2}) follows. We also show in Theorem \ref{Schwarz-lemma-Gn-2} that under certain condition, the achieved estimates are sufficient for the existence of an interpolating function from $\mathbb D$ to $\Gn$. In Section 4, we explicitly construct such an interpolating function. Section 5 deals with some geometric interplay between the members of $\Gn$ and $\widetilde{\Gamma_n}$. In Section 2, we accumulate few results from the literature which are used in the subsequent sections.\\

\noindent \textbf{Note.} The main idea and applied techniques to the results of Sections $3$ and $4$ of this article are borrowed from the paper \cite{Young-LMS}, where analogous results for the tetrablock $\mathbb E$ are achieved. The primary reason for which the techniques of \cite{Young-LMS} are applicable here is that $\widetilde{\mathbb G_3}$ is linearly isomorphic to $\mathbb E$.

\section{Background materials and preparatory results}
\noindent We begin with a set of $(n-1)$ fractional linear transformations $\Phi_1, \dots, \Phi_{n-1}$ which we introduced in \cite{pal-roy 2} to characterize the points in the extended symmetrized polydisc $\Gn$.
%------definition Rational Function----------- %
\begin{defn}
	For $z \in \C$, $y=(y_1,\dots,y_{n-1},q) \in \Cn $ and for any $j\in \left\{1,\dots,n-1\right\}$,
	let us define
	\begin{equation} \label{defn-p}
	\Phi_j(z,y) =
	\begin{cases}
	\dfrac{{n \choose j}qz-y_j}{y_{n-j}z-{n \choose j}} & \q \text{ if } y_{n-j}z\neq {n \choose j} \text{ and } y_j y_{n-j}\neq {n \choose j}^2 q \\
	\\
	\dfrac{y_j}{{n \choose j}} & \q \text{ if } y_j y_{n-j} = {n \choose j}^2 q \, .
	\end{cases}
	\end{equation}
\end{defn}
It was shown in \cite{pal-roy 2} that if $|y_{n-j}| < {n \choose j}$,  then
\begin{equation} \label{formula-D}
\n \Phi_j(.,y)\n_{H^{\infty}}=
\dfrac{{n \choose j} \left|y_j - \bar y_{n-j} q \right| + \left|y_j y_{n-j} - {n \choose j}^2 q \right|}{{n \choose j}^2 - |y_{n-j}|^2 } .
\end{equation}
Clearly $|y_{n-j}| < {n \choose j}$, for any point $y \in \Gn$. Thus equation $\eqref{formula-D}$ holds for any $y \in \Gn$.\\

For $ n\geq 3$, we introduced in \cite{pal-roy 2} the following subset $\mathcal J_n$ of $\Gn$ as follows:
\begin{equation}\label{eqn:21}
\mathcal J_n = 
\begin{cases}
\mathcal J_n^{odd} & \text{ if } n \text{ is odd}\\
\mathcal J_n^{even} & \text{ if } n \text{ is even},
\end{cases}
\end{equation}
where
\begin{align*}
\mathcal J_n^{odd} = \Big\{(y_1, \dots, y_{n-1}, y_n) \in \Gn : y_j = \dfrac{\nj}{n} y_1,\; & y_{n-j} = \dfrac{\nj}{n}  y_{n-1}\, , \\ 
&\text{for } j=2,\dots, \left[\frac{n}{2} \right] \Big\}
\end{align*}
and 
\begin{align*}
\mathcal J_n^{even} = \Big\{(y_1, \dots, y_{n-1}, y_n) \in \Gn : & y_{[\frac{n}{2} ]}= {n \choose [\frac{n}{2} ]} \df{y_1 + y_{n-1}}{2n},\;  y_j = \dfrac{\nj y_1}{n}, \\ 
y_{n-j} &= \dfrac{\nj y_{n-1}}{n}, \text{ for }j=2,\dots, \left[\frac{n}{2} \right]-1  \Big\}.
\end{align*}

%--------------------------
%Also define 
%\begin{equation} \label{defn-D}
%D_j(y) =\underset{z \in \D}{\sup}|\Phi_j (z,y)|= \n \Phi_j(.,y) \n _{H^\infty} \, ,
%\end{equation}\\
%where ${H^\infty}$ denotes the Banach space of bounded complex-valued analytic functions on $\D$ with supremum norm.\\
%Then the following holds (see \cite{pal-roy 2}).
%\begin{equation} \label{formula-D}
%D_j(y)=\begin{cases}
%\dfrac{{n \choose j} \left|y_j - \bar y_{n-j} q \right| + \left|y_j y_{n-j} - {n \choose j}^2 q \right|}{{n \choose j}^2 - |y_{n-j}|^2 } & \q \text{ if } |y_{n-j}| < {n \choose j} \text{ and } y_j y_{n-j}\neq {n \choose j}^2 q  \\ \\
%\dfrac{|y_j|}{{n \choose j}} & \q \text{ if } y_j y_{n-j} = {n \choose j}^2 q \\ \\
%\infty & \q \textrm{otherwise}\, .  
%\end{cases}
%\end{equation}
%--------------------------
The following theorem provides a few characterizations for the points in $\Gn$.

\begin{thm}[\cite{pal-roy 2}, Theorem 3.1.4]\label{char G 3}
	For a point $y =(y_1,\dots, y_{n-1},q) \in \Cn$, the following are equivalent:
	\begin{itemize}[leftmargin=*]
		\item[$(1)$] $y \in \Gn$;\\
		\item[$(2)$] ${n \choose j} - y_j z - y_{n-j}w + { n \choose j} qzw \neq 0$, for all $z,w \in \overline\D$ and for all $j = 1, \dots, \left[\frac{n}{2}\right]$;\\
%		\item[$(3)$] for all $j = 1, \dots, \left[\frac{n}{2}\right]$ either
%		\[
%		\n \Phi_j(.,y)\n_{H^{\infty}} < 1 \; \text{and if } y_jy_{n-j}= {n \choose j}^2 q \; \text{ then,}
%		\;\text{ in addition, }\; |y_{n-j}|< {n \choose j}
%		\]
%		or 
%		\[
%		\n \Phi_{n-j}(.,y)\n_{H^{\infty}} < 1 \; \text{and if } y_jy_{n-j}= {n \choose j}^2 q \; \text{ then,}
%		\;\text{ in addition, }\; |y_j|< {n \choose j} ;
%		\]
		\item[$(3)$] for all $j = 1, \dots, \left[\frac{n}{2}\right]$ either
		\[
		{n \choose j}\left|y_j - \bar y_{n-j} q\right| + \left|y_j y_{n-j} - {n \choose j}^2 q \right| < {n \choose j}^2 -|y_{n-j}|^2
		\]
		or 
		\[
		{n \choose j}\left|y_{n-j} - \bar y_j q\right| + \left|y_j y_{n-j} - {n \choose j}^2 q \right| < {n \choose j}^2 -|y_j|^2 ;
		\]
%		\item[$(5)$] for all $j = 1, \dots, \left[\frac{n}{2}\right]$ either
%		\[
%		|y_j|^2 - |y_{n-j}|^2 + {n \choose j}^2|q|^2 + 2{n \choose j}\left|y_{n-j} - \bar y_j q\right| < {n \choose j}^2 \text{ and } \; |y_{n-j}|< {n \choose j}
%		\]
%		or 
%		\[
%		|y_{n-j}|^2 - |y_j|^2 + {n \choose j}^2|q|^2 + 2{n \choose j}\left|y_j - \bar y_{n-j} q\right| < {n \choose j}^2 \text{ and } \; |y_j|< {n \choose j} ;
%		\]
%		\item[$(6)$] $|q|<1$ and $|y_j|^2 + |y_{n-j}|^2 - {n \choose j}^2|q|^2 + 2\left| y_jy_{n-j} - {n \choose j}^2 q \right| < {n \choose j}^2$ for all $j = 1, \dots, \left[\frac{n}{2}\right]$;\\
		\item[$(4)$] $\left| y_{n-j} - \bar y_j q \right| + \left| y_j -\bar y_{n-j} q\right| < {n \choose j} (1 - |q|^2)$ for all $j = 1, \dots, \left[\frac{n}{2}\right]$;\\
		\item[$(5)$] there exist $\left[\frac{n}{2}\right]$ number of $2 \times 2$ matrices $B_1,\dots, B_{\left[\frac{n}{2}\right]}$ such that $\n B_j \n < 1$, $y_j = {n \choose j}[B_j]_{11} $, $y_{n-j} = {n \choose j}[B_j]_{22}$ for all $j = 1, \dots, \left[\frac{n}{2}\right]$ and 
		\[\det B_1= \det B_2 = \cdots = \det B_{\left[\frac{n}{2}\right]}= q \; .\]
%		\item[$(9)$] there exist $\left[\frac{n}{2}\right]$ number of $2 \times 2$ symmetric matrices $B_1,\dots, B_{\left[\frac{n}{2}\right]}$ such that $\n B_j \n < 1$, $y_j = {n \choose j}[B_j]_{11} $, $y_{n-j} = {n \choose j}[B_j]_{22}$ for all $j = 1, \dots, \left[\frac{n}{2}\right]$ and \[\det B_1= \det B_2 = \cdots = \det B_{\left[\frac{n}{2}\right]}= q \; .\]	
	\end{itemize}
\end{thm}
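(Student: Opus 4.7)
My plan is to exploit the observation that each of conditions (2)--(5) decouples into statements indexed by $j \in \{1, \ldots, [n/2]\}$, and each individual $j$-th statement involves only the three scalars $y_j$, $y_{n-j}$, and $q$. The defining condition (1) also decouples along the same lines: for fixed $q \in \D$, the two linear relations $y_j = \beta_j + \bar\beta_{n-j}q$ and $y_{n-j} = \beta_{n-j} + \bar\beta_j q$ have the unique solution $\beta_j = (y_j - \bar y_{n-j}q)/(1-|q|^2)$ and $\beta_{n-j} = (y_{n-j} - \bar y_j q)/(1-|q|^2)$. It therefore suffices to prove the $j$-wise equivalences and then intersect over $j \in \{1,\dots,[n/2]\}$. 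I would proceed via $(1) \Leftrightarrow (4) \Leftrightarrow (3) \Leftrightarrow (2)$ and handle $(1) \Leftrightarrow (5)$ separately.

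The step $(1) \Leftrightarrow (4)$ is immediate: summing the moduli of the expressions above gives $(|\beta_j|+|\beta_{n-j}|)(1-|q|^2) = |y_j - \bar y_{n-j}q| + |y_{n-j} - \bar y_j q|$, so the constraint $|\beta_j|+|\beta_{n-j}|<\nj$ is precisely the $j$-th clause of (4). For $(2) \Leftrightarrow (3)$, I view the polynomial $p_j(z,w) := \nj - y_j z - y_{n-j}w + \nj q z w$ as affine in $w$ and solve $p_j(z,w)=0$ to obtain $w = \Phi_j(z,y)$, with $\Phi_j$ as in \eqref{defn-p}. Hence $p_j$ is non-vanishing on $\overline{\D}^2$ iff $\|\Phi_j(\cdot,y)\|_{H^\infty} < 1$ together with the pole-control condition $|y_{n-j}|<\nj$. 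Substituting the explicit formula \eqref{formula-D} produces the first inequality in (3); the symmetry of $p_j$ under $z \leftrightarrow w$ swaps $y_j$ and $y_{n-j}$ and yields the second alternative. The link $(3) \Leftrightarrow (4)$ is then a direct algebraic manipulation: using $y_j - \bar y_{n-j}q = \beta_j(1-|q|^2)$, $y_{n-j}-\bar y_j q = \beta_{n-j}(1-|q|^2)$, and $y_j y_{n-j} - \nj^2 q = (\beta_j + \bar\beta_{n-j}q)(\beta_{n-j} + \bar\beta_j q) - \nj^2 q$, each factor appearing in (3) rewrites in terms of $\beta_j, \beta_{n-j}$, and the strict inequality collapses to $|\beta_j| + |\beta_{n-j}| < \nj$.

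For $(1) \Leftrightarrow (5)$, given $\beta_j, \beta_{n-j}$ as in (1), I would construct each $B_j$ as a $2\times 2$ matrix with diagonal $[B_j]_{11} = \beta_j + \bar\beta_{n-j}q$, $[B_j]_{22} = \beta_{n-j} + \bar\beta_j q$, determinant $q$, and operator norm strictly less than $1$, by a careful choice of off-diagonal entries. Conversely, given $B_j$ as in (5), the Schur-type criterion expressing $I - B_j^* B_j > 0$ as the positivity of a $2\times 2$ Hermitian matrix converts $\|B_j\|<1$, after using $\det B_j = q$, into precisely the inequality in (4). This last step is the main technical obstacle: the forward direction requires a judicious choice of off-diagonal entries (the tetrablock realization), while the backward direction requires identifying the Schur determinant with the specific expression on the right-hand side of (4). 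As the Note at the end of the Introduction emphasizes, the linear isomorphism $\widetilde{\mathbb{G}}_3 \cong \mathbb{E}$ supplies the template from the tetrablock literature, and in the general $n$ case one applies the same recipe index by index.
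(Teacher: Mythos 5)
You cannot be compared against a proof in this paper, because the paper does not prove Theorem \ref{char G 3}: it is imported verbatim from \cite{pal-roy 2} (Theorem 3.1.4), and the underlying argument there is the tetrablock-style one of Abouhajar--White--Young carried out pair by pair. Your skeleton matches that: the pairwise decoupling is legitimate, your step $(1)\Leftrightarrow(4)$ via the unique solution $\beta_j=(y_j-\bar y_{n-j}q)/(1-|q|^2)$, $\beta_{n-j}=(y_{n-j}-\bar y_jq)/(1-|q|^2)$ is complete (and (4) does force $|q|<1$ automatically), and your handling of $(2)\Leftrightarrow(3)$ through $\Phi_j$ and \eqref{formula-D} is right in outline. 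One slip there: solving $p_j(z,w)=0$ for $w$ gives $w=(\nj-y_jz)/(y_{n-j}-\nj qz)=1/\Phi_{n-j}(z,y)$, not $\Phi_j(z,y)$; the clean statement is the factorization $p_j(z,w)=(\nj-y_{n-j}w)\bigl(1-z\,\Phi_j(w,y)\bigr)$, used together with the observation that (2) already forces $|y_j|<\nj$ and $|y_{n-j}|<\nj$ (put $w=0$, respectively $z=0$).

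The genuine gap is the link $(3)\Leftrightarrow(4)$, on which your whole chain $(1)\Leftrightarrow(4)\Leftrightarrow(3)\Leftrightarrow(2)$ hinges. It does not ``collapse'' after substituting the $\beta$'s: in those coordinates the first alternative of (3) becomes $\nj|\beta_j|(1-|q|^2)+\bigl|\beta_j\beta_{n-j}+\bar\beta_j\bar\beta_{n-j}q^2+(|\beta_j|^2+|\beta_{n-j}|^2-\nj^2)q\bigr|<\nj^2-|\beta_{n-j}+\bar\beta_jq|^2$, and, for instance, $(4)\Rightarrow(3)$ cannot be obtained by termwise triangle inequalities: choosing phases with $\beta_j\beta_{n-j}\bar q\ge 0$ and letting $|q|\to 1$, the naive estimate degenerates to the false requirement $\nj^2+4|\beta_j||\beta_{n-j}|\le\nj^2$. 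One has to exploit the cancellation, e.g.\ through identities such as $y_{n-j}-\bar y_jq=\dfrac{\bar y_j}{\nj^2}\bigl(y_jy_{n-j}-\nj^2q\bigr)+y_{n-j}\Bigl(1-\dfrac{|y_j|^2}{\nj^2}\Bigr)$ --- note that $|y_{n-j}-\bar y_jq|$ does not even occur in the first alternative of (3), so some such identity is indispensable for $(3)\Rightarrow(4)$ --- or else route the argument through (2) and a realization/maximum-modulus argument rather than directly between (3) and (4). The same remark applies to $(1)\Leftrightarrow(5)$: the explicit choice of the off-diagonal entries of $B_j$, and the verification that $\|B_j\|<1$ with $\det B_j=q$ forces the inequality in (4), are exactly the computations that carry the content, and you name them without supplying them. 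So the proposal reproduces the correct architecture but leaves unproved precisely the steps in which the theorem's substance lies.
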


In \cite{pal-roy 2}, we obtained several equivalent necessary conditions which established a Schwarz type lemma for $\Gn$. Here we mention a few of them.

\begin{thm}[\cite{pal-roy 2}, Theorem 5.2.1]\label{Schwarz Gn}
	Let $\lm_0 \in \D \; \backslash \; \{0\}$ and let $ y^0= (y_1^0,\dots,y_{n-1}^0,q^0) \in \Gn$. Then in the set of following conditions, $(1)$ implies $(2)$ and $(3)$.
	\item[$(1)$]
	There exists an analytic function $\vp  :  \D \rightarrow  \Gn $ such that  $\; \vp(0) = (0,\dots,0) $ and $\; \vp(\lm_0) = y^0 $.
	\item[$(2)$]
	\[ 
	\max_{1\leq j \leq n-1} \ls \n \Phi_j(.,y^0)\n_{H^{\infty}} \rs \leq |\lm_0| \, .
	\]
	
%	\item[$(3)$] 
%	For each $j =1 , \dots,\left[\frac{n}{2}\right] $ the following hold 
%	\[	
%	\begin{cases}
%	\n \Phi_j(.,y^0) \n_{H^{\infty}} \leq |\lm_0|  \q & \text{if } \; |y_{n-j}^0|\leq |y_j^0| \\
%	\n \Phi_{n-j}(.,y^0) \n_{H^{\infty}} \leq |\lm_0| \q & \text{if } \; |y_j^0|\leq |y_{n-j}^0| \, .
%	\end{cases}\]
%	
%	\item[$(4)$]
%	If $n$ is odd and $\tilde y = \lf \tilde y_1, \dots, \tilde y_{n-1}, \tilde q \rf \in \mathcal F_1(y^0)$ where 
%	\[
%	\begin{cases}
%	\tilde y_j = \dfrac{y_j^0}{\lambda_0} \text{ and } \tilde y_{n-j}= y_{n-j}^0 & \text{when } |y_{n-j}^0| \leq |y_j^0|\\
%	\tilde y_j = y_j^0 \text{ and } \tilde y_{n-j} = \dfrac{y_{n-j}^0}{\lambda_0} & \text{otherwise}
%	\end{cases}
%	\]
%	for each $j = 1 , \dots,\left[\frac{n}{2}\right]$, then $\tilde y \in \Gamn$. If $n$ is even and \\
%	$\hat y = \left( \frac{n+1}{n} \tilde y_1, \dots, \frac{n+1}{\frac{n}{2}+1}\tilde y_{\frac{n}{2}}, \frac{n+1}{\frac{n}{2}+1}  \tilde y_{\frac{n}{2}*}, \frac{n+1}{\frac{n}{2}+2} \tilde y_{\frac{n}{2}+1}, \dots, \frac{n+1}{n} \tilde y_{n-1}, \tilde q \right) \in \mathcal F_2(y^0)$ where
%	\[
%	\begin{cases}
%	\tilde y_j = \dfrac{y_j^0}{\lambda_0} \text{ and } \tilde y_{n-j}= y_{n-j}^0 & \text{when } |y_{n-j}^0| \leq |y_j^0|\\
%	\tilde y_j = y_j^0 \text{ and } \tilde y_{n-j} = \dfrac{y_{n-j}^0}{\lambda_0} & \text{otherwise}
%	\end{cases}
%	\]
%	for each $j = 1 , \dots,\frac{n}{2}-1$, then $\hat y \in \widetilde{\Gamma}_{n+1}$.\\
%	
	\item[$(3)$]  There exist $\left[\frac{n}{2}\right]$ number of functions $F_1, F_2, \dots F_{\left[\frac{n}{2}\right]}$ in the Schur class such that  
	$F_j(0) =
	\begin{bmatrix}
	0 & * \\
	0 & 0
	\end{bmatrix} ,\:$  
	and $\; F_j(\lm_0) = B_j$, for $j = 1, \dots, \left[\frac{n}{2}\right]$, where $\det B_1= \cdots = \det B_{[\frac{n}{2}]}= q^0$,
	$y_j^0 = \nj [B_j]_{11}$ and $y_{n-j}^0 = \nj [B_j]_{22}$.\\

Furthermore, if $ y^0 \in \mathcal J_n $ then all the conditions $(1)-(3)$ are equivalent.	
\end{thm}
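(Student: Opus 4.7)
My plan is to prove the two always-true implications $(1)\Rightarrow(2)$ and $(1)\Rightarrow(3)$, and then handle the delicate converses under the extra hypothesis $y^0 \in \mathcal{J}_n$. The argument for $(1)\Rightarrow(2)$ is a parametrised application of the classical scalar Schwarz lemma. Fix $j \in \{1,\dots,n-1\}$ and, for every $z \in \overline{\D}$, set $g_{j,z}(\lm) := \Phi_j(z,\vp(\lm))$. Since $\vp(\lm) \in \Gn$ forces $|y_{n-j}(\lm)| < \nj$, the denominator in \eqref{defn-p} is non-vanishing on $\overline{\D}$, so $g_{j,z}$ is holomorphic on $\D$. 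Characterization $(3)$ of Theorem \ref{char G 3} combined with \eqref{formula-D} gives $\|\Phi_j(\cdot,\vp(\lm))\|_{H^\infty} < 1$, hence $|g_{j,z}(\lm)| \le 1$. The hypothesis $\vp(0) = 0$ yields $g_{j,z}(0) = \Phi_j(z, 0) = 0$, so the classical Schwarz lemma gives $|\Phi_j(z, y^0)| \le |\lm_0|$ for every $z \in \overline{\D}$. Taking suprema over $z$ and then over $j$ yields (2).

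For $(1)\Rightarrow(3)$ the plan is to use characterization $(5)$ of Theorem \ref{char G 3} pointwise in $\lm$ and then make a holomorphic selection of the off-diagonal entries. For every $\lm \in \D$, $\vp(\lm) \in \Gn$ produces $2\times 2$ matrices of spectral norm $<1$ with prescribed diagonals $y_j(\lm)/\nj$, $y_{n-j}(\lm)/\nj$ and common determinant $q(\lm)$. One places these matrices in a normal form whose off-diagonal entries are rational expressions in $y_j,y_{n-j},q$ (so that they are manifestly holomorphic in $\lm$) and whose spectral norm can be verified to remain $<1$; since $\vp(0) = 0$, the resulting matrix $F_j(0)$ is strictly upper triangular with zero diagonal, i.e.\ of the required form $\begin{pmatrix} 0 & * \\ 0 & 0 \end{pmatrix}$. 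The matrices $B_j := F_j(\lm_0)$ are then the matrices of characterization $(5)$ at $y^0$.

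The substantive part is the converse direction when $y^0 \in \mathcal{J}_n$. The crucial structural observation is that on $\mathcal{J}_n^{odd}$ the normalised diagonals $y_j^0/\nj = y_1^0/n$ and $y_{n-j}^0/\nj = y_{n-1}^0/n$ are independent of $j \in \{1,\dots,[n/2]\}$, so the matrices $B_1,\dots,B_{[n/2]}$ of $(3)$ can be taken equal to a single matrix $B$ with diagonal $(y_1^0/n, y_{n-1}^0/n)$ and $\det B = q^0$. Picking one Schur interpolant $F : \D \to \mathcal{M}_2(\C)$ with $F(0) = \begin{pmatrix} 0 & * \\ 0 & 0 \end{pmatrix}$ and $F(\lm_0) = B$, I would define $[\vp(\lm)]_j := \nj [F(\lm)]_{11}$ for $j \le [n/2]$, $[\vp(\lm)]_j := \nj [F(\lm)]_{22}$ for $j > [n/2]$, and $[\vp(\lm)]_n := \det F(\lm)$. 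Applying characterization $(5)$ with $B_1(\lm) = \cdots = B_{[n/2]}(\lm) = F(\lm)$ shows $\vp(\lm) \in \Gn$, and $\vp(0) = 0$, $\vp(\lm_0) = y^0$ are immediate from the construction. When $y^0 \in \mathcal{J}_n^{even}$, the middle index $j = n/2$ has equal diagonals $(y_1^0 + y_{n-1}^0)/(2n)$ and is accommodated by a second Schur interpolant whose $(1,1)$- and $(2,2)$-entries coincide, compatible with $F$ via the defining relation of $\mathcal{J}_n^{even}$.

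I expect the main obstacle to be this last step: in general the $F_j$'s supplied by (3) need not share a common determinant throughout $\D$ (only at $\lm_0$), so the $q$-coordinate of the candidate $\vp$ is not obviously well-defined. The role of the hypothesis $y^0 \in \mathcal{J}_n$ is precisely to collapse the several $B_j$'s onto a single matrix (or, in the even case, two explicitly compatible matrices), from which a matrix-valued Schur interpolant with a single determinant-function furnishes the required $\vp$. This matches the strategy of \cite{Young-LMS} for the tetrablock, transported to $\Gn$ via the special geometry of $\mathcal{J}_n$.
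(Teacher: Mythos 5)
The central gap is that you never derive anything from condition $(2)$, so the final assertion of the theorem --- that for $y^0\in\mathcal J_n$ all of $(1)$--$(3)$ are equivalent --- is not established. You prove $(1)\Rightarrow(2)$ and sketch $(3)\Rightarrow(1)$, but equivalence also requires $(2)\Rightarrow(1)$ (or $(2)\Rightarrow(3)$), i.e.\ turning the scalar estimate $\max_j\n\Phi_j(\cdot,y^0)\n_{H^\infty}\le|\lm_0|$ into a Schur-class function $F$ with $F(0)=\begin{bmatrix}0 & * \\ 0 & 0\end{bmatrix}$ and $F(\lm_0)=B$ realizing $y^0$. That is the substantive step of the actual argument (in \cite{pal-roy 2}, as in \cite{awy} and \cite{Young-LMS} for the tetrablock, it is carried out by a matrix M\"obius transformation combined with Parrott's theorem, the same machinery this paper deploys in Theorem \ref{Schwarz-lemma-Gn-2}), and your proposal is silent on it; the obstacle you single out (the $F_j$'s not sharing a determinant off $\lm_0$) is not the missing piece. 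Your $(1)\Rightarrow(3)$ is also not a proof: characterization $(5)$ of Theorem \ref{char G 3} gives a contractive completion \emph{pointwise}, and the claim that some ``normal form with off-diagonal entries rational in $y_j,y_{n-j},q$'' is contractive throughout $\D$ is precisely what would have to be proved --- the norm-minimizing completion of a $2\times 2$ matrix with prescribed diagonal and determinant depends on moduli of the data, so no holomorphic selection is evident, and none is exhibited.

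Two further slips. In $(1)\Rightarrow(2)$ you justify $\n\Phi_j(\cdot,\vp(\lm))\n_{H^\infty}<1$ for \emph{every} $j$ by citing characterization $(3)$ of Theorem \ref{char G 3}; but that condition is an ``either/or'' over the pair $\{j,n-j\}$ and by itself bounds only one of $\Phi_j,\Phi_{n-j}$. The statement you need is true, but it follows instead from characterization $(4)$ --- which says each triple $\big(y_j/\nj,\,y_{n-j}/\nj,\,q\big)$ lies in the tetrablock --- together with the tetrablock norm inequality of \cite{awy}, and that argument must be supplied. In the converse under $\mathcal J_n$, characterization $(5)$ requires strict contractions, whereas a Schur-class $F$ only gives $\n F(\lm)\n\le 1$; to conclude that $\pi_n\circ(F,\dots,F)$ maps $\D$ into the open set $\Gn$ rather than $\Gamn$ you need an extra maximum-principle step (if $\n F(0)\n<1$ then $\n F(\lm)\n<1$ on all of $\D$ by subharmonicity of $\lm\mapsto\n F(\lm)\n$, and the degenerate case $|*|=1$ forces $F$ into a rigid form that must be treated separately). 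Finally, in the even case the correct device is the averaged middle coordinate ${n\choose n/2}\big([F]_{11}+[F]_{22}\big)/2$ already built into $\pi_{2k}$ applied to a single interpolant, not a ``second Schur interpolant with coinciding diagonal entries,'' whose existence you do not establish.
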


The following result is known as Parrott's Theorem and it will be used in sequel. One can see Theorem 12.22 in \cite{young-2} for a proof to this result.

\begin{thm}[\cite{parrott}, Theorem 1]\label{parrot}
	Let $H_i, K_i$ are Hilbert spaces for $i=1,2$, and let 
	$$ \begin{bmatrix}
	Q\\
	\\
	S
	\end{bmatrix}
	:H_2 \longrightarrow K_1 \oplus K_2, \qq 
	\begin{bmatrix}
	R & S
	\end{bmatrix}
	: H_1 \oplus H_2 \longrightarrow K_2$$
	be contractions. Then there exists $P \in \mathcal{L}(H_1,K_1)$ such that 
	$$ \begin{bmatrix}
	P & Q \\
	R & S
	\end{bmatrix}
	: H_1 \oplus H_2 \longrightarrow K_1 \oplus K_2$$
	is a contraction.
\end{thm}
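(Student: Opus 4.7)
The plan is to prove Parrott's theorem by exhibiting an explicit contractive completion $P$, built from the defect operators of $S$ and contractions that come from Douglas's factorization lemma. The slick observation is that the desired block operator can then be written as a product of three contractions, with the middle factor being the unitary Julia matrix associated with $S$.

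\textbf{Step 1: Extract contractions from the row and column hypotheses.} Let $D_S=(I-S^*S)^{1/2}\in\mathcal L(H_2)$ and $D_{S^*}=(I-SS^*)^{1/2}\in\mathcal L(K_2)$ be the defect operators of $S$. The hypothesis that $\begin{bmatrix}Q\\S\end{bmatrix}$ is a column contraction gives $Q^*Q\leq I-S^*S=D_S^2$, so by Douglas's factorization lemma there is a contraction $Y:\overline{\mathrm{Ran}}\,D_S\to K_1$ (extended by $0$ to all of $H_2$) with $Q=YD_S$. Dually, $\begin{bmatrix}R&S\end{bmatrix}$ being a row contraction gives $RR^*\leq D_{S^*}^2$, hence $R=D_{S^*}X$ for some contraction $X:H_1\to\overline{\mathrm{Ran}}\,D_{S^*}\subset K_2$.

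\textbf{Step 2: Define $P$ and exhibit the factorization.} Set $P:=-YS^*X\in\mathcal L(H_1,K_1)$. Consider the ``Julia matrix''
\[
J_S:=\begin{bmatrix} -S^* & D_S \\ D_{S^*} & S \end{bmatrix} : K_2\oplus H_2\longrightarrow H_2\oplus K_2.
\]
A direct multiplication, using $Q=YD_S$, $R=D_{S^*}X$, and the definition of $P$, yields the identity
\[
\begin{bmatrix} P & Q \\ R & S \end{bmatrix}
=\begin{bmatrix} Y & 0 \\ 0 & I \end{bmatrix}\, J_S\, \begin{bmatrix} X & 0 \\ 0 & I \end{bmatrix}.
\]

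\textbf{Step 3: Verify $J_S$ is unitary.} Computing $J_S^*J_S$ block by block, the diagonal entries become $SS^*+D_{S^*}^2=I$ and $D_S^2+S^*S=I$, while the off-diagonal entries are $-SD_S+D_{S^*}S$. The intertwining identity $SD_S=D_{S^*}S$ (which follows from $S(I-S^*S)=(I-SS^*)S$ and functional calculus) makes these vanish, so $J_S^*J_S=I$; the adjoint computation is symmetric, so $J_S$ is unitary. The outer block-diagonal factors have norm $\max(\|Y\|,1)=1$ and $\max(\|X\|,1)=1$, so the product is a contraction, completing the proof.

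\textbf{Main obstacle.} Since the statement is classical and the proof is essentially algebraic once the Julia matrix is recognized, the only real conceptual step is guessing the formula $P=-YS^*X$; one can motivate this by parametrizing all contractive completions of a $2\times2$ block operator with three prescribed entries, where $P$ in the ``center'' of the parametrization is exactly this choice. Technically, one must be careful that $Y$ is defined only on $\overline{\mathrm{Ran}}\,D_S$ and $X$ maps into $\overline{\mathrm{Ran}}\,D_{S^*}$ — but extending $Y$ by zero and noting that the relevant compositions only use values on these ranges makes the factorization legitimate.
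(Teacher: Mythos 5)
Your proof is correct. Note that the paper does not prove this statement at all: it quotes Parrott's theorem with a citation to Parrott's original article and to Theorem 12.22 of Young's book, so there is no in-paper argument to compare with. Your argument is the standard and complete one: the column hypothesis gives $Q^*Q\leq D_S^2$ and the row hypothesis gives $RR^*\leq D_{S^*}^2$, Douglas's lemma produces contractions $Y$ and $X$ with $Q=YD_S$ and $R=D_{S^*}X$, and with $P=-YS^*X$ the block matrix factors as $\operatorname{diag}(Y,I)\,J_S\,\operatorname{diag}(X,I)$, where the Julia matrix $J_S$ is unitary thanks to the intertwining relation $SD_S=D_{S^*}S$ (which indeed follows from $S(I-S^*S)=(I-SS^*)S$ by approximating the square root by polynomials). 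The dimension bookkeeping in the factorization and the norm estimate $\lVert\operatorname{diag}(Y,I)\rVert\leq 1$, $\lVert\operatorname{diag}(X,I)\rVert\leq 1$ all check out, so the completed matrix is a contraction as required; your handling of the technical point that $Y$ and $X$ are only determined on $\overline{\operatorname{Ran}}\,D_S$ and into $\overline{\operatorname{Ran}}\,D_{S^*}$ (extend by zero) is also fine.
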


\section{A Schwarz lemma for $\Gn$ and $\gn$}
%A Schwarz lemma for $\Gn$ was established in \cite{pal-roy 2} and %it is mentioned here as Theorem \ref{Schwarz Gn}. In this article %we present another version of Schwarz lemma for $\Gn$. 
%------------------
%For any $n \in \mathbb N$, let $B_1,\dots, B_{\left[\frac{n}{2}\right]}$ be $\left[\frac{n}{2}\right]$ number of $2 \times 2$ matrices such that $\n B_j \n < 1$ and $\det B_1= \det B_2 = \cdots = \det B_{\left[\frac{n}{2}\right]}$. We define a map $\pi_n$ in the following way
%\begin{align*}
%&\pi_n \lf B_1,\dots, B_{\left[\frac{n}{2}\right]} \rf \\
%& = 
%\begin{cases}
%\left( {n \choose 1} [B_1]_{11}, \dots , {n \choose [\frac{n}{2}]} [B_{\left[\frac{n}{2}\right]}]_{11}, {n \choose [\frac{n}{2}]} [B_{\left[\frac{n}{2}\right]}]_{22}, \dots, {n \choose 1} [B_1]_{22}, \det B_1  \right) & \\
%\qq \qq \qq \qq \qq \qq \qq \qq \qq \qq \qq \qq  \text{ when } n \text{ is odd} &\\
%h \lf \lf {n \choose 1} [B_1]_{11}, \dots , {n \choose [\frac{n}{2}]} [B_{\left[\frac{n}{2}\right]}]_{11}, {n \choose [\frac{n}{2}]} [B_{\left[\frac{n}{2}\right]}]_{22}, \dots, {n \choose 1} [B_1]_{22}, \det B_1 \rf \rf & \\ 
%\qq \qq \qq \qq \qq \qq \qq \qq \qq \qq \qq \qq  \text{ when } n \text{ is even} &
%\end{cases}
%\end{align*}
%-----------
\noindent Let $B_1,\dots, B_k$ be $2 \times 2$ contractive matrices such that $\det B_1= \det B_2 = \cdots = \det B_k$. We define two functions $\pi_{2k+ 1}$ and $\pi_{2k}$ in the following way:
\begin{align*}
&\pi_{2k+1} \lf B_1,\dots, B_k \rf \\
& = \lf {n \choose 1} [B_1]_{11}, \dots , {n \choose k} [B_k]_{11}, {n \choose k} [B_k]_{22}, \dots, {n \choose 1} [B_1]_{22} , \det B_1 \rf \\
\end{align*}
and\\
$\pi_{2k} \lf B_1,\dots, B_k \rf $
\begin{align*}
=\Bigg( {n \choose 1} [B_1]_{11}, \dots , {n \choose k-1} [B_{k-1}]_{11},& {n \choose k} \df{\lf [B_k]_{11} + [B_k]_{22} \rf}{2},\\ {n \choose k-1}& [B_{k-1}]_{22}, \dots, {n \choose 1} [B_1]_{22}, \det B_1  \Bigg).
\end{align*}
Then by Theorem \ref{char G 3}, we have 
\[\pi_{2k} \lf B_1,\dots, B_k \rf \in \widetilde{\mathbb G}_{2k} \q \text{ and } \q \pi_{2k+1} \lf B_1,\dots, B_k \rf \in \widetilde{\mathbb G}_{2k+1}.\]
 
\noindent For $ n\geq 3$, let $\mathcal K_n$ be the following subset of $\C^n$ :
\[
\mathcal K_n = 
\begin{cases}
\mathcal K_n^{odd} & \text{ if } n \text{ is odd}\\
\mathcal K_n^{even} & \text{ if } n \text{ is even},
\end{cases}
\]
where
\begin{align*}
	\mathcal K_n^{odd} = \Big\{& \lf y_1, \dots, y_{n-1}, y_n \rf \in \C^n : y_j = \dfrac{\nj}{n} y_1,\; y_{n-j} = \dfrac{\nj}{n}  y_{n-1}\,, \\ 
	& \q \text{for } j=2,\dots, \left[\frac{n}{2} \right]  \& \, \max \Big\{ \frac{|y_1|}{n}, \frac{|y_{n-1}|}{n} \Big\} + |y_n| \leq 1\Big\}
\end{align*}
and 
\begin{align*}
& \mathcal K_n^{even} = \Big\{ \lf y_1, \dots, y_{n-1}, y_n \rf \in \C^n : y_{[\frac{n}{2} ]}= {n \choose [\frac{n}{2} ]} \df{y_1 + y_{n-1}}{2n},  y_j = \dfrac{\nj y_1}{n}, \\& y_{n-j} = \dfrac{\nj y_{n-1}}{n},  \text{ for }j=2,\dots, \left[\frac{n}{2} \right]-1 \;\& \; \max \{ \frac{|y_1|}{n}, \frac{|y_{n-1}|}{n} \} + |y_n| \leq 1  \Big\}.
\end{align*}

For any $Z \in \C^{2 \times 2}$ with $\n Z \n < 1$, let $D_Z = (1-Z^*Z)^{\frac{1}{2}}$. We denote the unit ball of $\C^{2 \times 2}$ by $R_I(2,2)$. Consider the following function: 
$$ \M_Z(X) = -Z + D_{Z^*}X(1-Z^*X)^{-1}D_Z \q \text{for } X \in R_I(2,2).$$ 
The function $\M_Z$ is a matrix M\"{o}bius transformation that maps $Z$ to $0$. The transformation $\M_Z$ is an automorphism of $R_I(2,2)$, and $(\M_Z)^{-1}=\M_{-Z}$.\\

We now present a Schwarz type lemma for $\Gn$.

%\begin{align*}
%\mathcal K_n = \Big\{ \lf y_1, \dots,\dfrac{\nj}{n} y_1, \dots, \dfrac{{n \choose [\frac{n}{2}]}}{n} y_1, \dfrac{{n \choose [\frac{n}{2}]}}{n} y_{n-1}, \dots,\dfrac{\nj}{n}  y_{n-1}, \dots, y_{n-1}, y_n \rf \in \C^n   \Big\}
%\end{align*}

\begin{thm}\label{Schwarz-lemma-Gn-2}
	Let $x=(x_1,\dots,x_n)	\in \C^n$ and there exists an analytic map $\vp : \D \longrightarrow \Gn$ such that $\vp(0)=(0,\dots,0)$ and $\vp'(0)=x$. Then
	\begin{equation}\label{condition-yn}
		\max_{1\leq j \leq n-1}\left\{ \df{|x_j|}{\nj}\right\} + |x_n| \leq 1. 
	\end{equation}	
	The converse holds if $x \in \mathcal K_n$.  
\end{thm}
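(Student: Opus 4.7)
My plan is to prove necessity by a direct one-variable Schwarz-lemma argument applied to the family of fractional linear test functions $\Phi_j$, and to prove sufficiency by reducing, via the $2\times 2$-contraction characterization of $\Gn$ (Theorem \ref{char G 3}(5)), to a single one-point Schur--Pick problem that is then solved with Parrott's theorem. For necessity, fix $j \in \{1,\dots,n-1\}$ and $z \in \overline{\D}$, and set $H_{z,j}(\lambda) := \Phi_j(z,\vp(\lambda))$. Because $\vp(\lambda) \in \Gn$, the equivalence of conditions (3)--(4) of Theorem \ref{char G 3} (applied at both $j$ and $n-j$) together with formula \eqref{formula-D} yields $\|\Phi_j(\cdot,\vp(\lambda))\|_{H^\infty}<1$, and $\Phi_j(z,0)=0$. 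Hence $H_{z,j}:\D\to\D$ is analytic with $H_{z,j}(0)=0$, and the classical Schwarz lemma produces $|H'_{z,j}(0)|\le 1$. Expanding $\vp(\lambda)=\lambda x+O(\lambda^2)$ and differentiating gives
\[
H'_{z,j}(0) \;=\; \frac{x_j-\binom{n}{j}x_n z}{\binom{n}{j}},
\]
and supremising over $z\in\overline{\D}$ (achieved by aligning the phase of $-\binom{n}{j}x_nz$ with $x_j$) produces $|x_j|/\binom{n}{j}+|x_n|\le 1$. Taking the maximum over $j$ yields \eqref{condition-yn}.

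For sufficiency, suppose $x\in\mathcal K_n$ and \eqref{condition-yn} holds. By Theorem \ref{char G 3}(5), constructing $\vp$ reduces to constructing $[n/2]$ analytic Schur-class matrix functions $B_1,\dots,B_{[n/2]}:\D\to\overline{R_I(2,2)}$ that share a common determinant $q(\lambda)$ and have the prescribed diagonal entries $y_j/\binom{n}{j}$ and $y_{n-j}/\binom{n}{j}$. The defining $\mathcal K_n^{odd}$ relations collapse all diagonal data so that every $B_j$ has diagonal $(y_1/n,\,y_{n-1}/n)$; one may therefore take $B_1=\cdots=B_{[n/2]}=F$ for a single Schur function $F$ and I must arrange
\[
F(0)=Z_0:=\begin{bmatrix}0 & \alpha \\ 0 & 0\end{bmatrix},\;[F'(0)]_{11}=\tfrac{x_1}{n},\;[F'(0)]_{22}=\tfrac{x_{n-1}}{n},\;[F'(0)]_{21}=-\tfrac{x_n}{\alpha},
\]
for a suitable $\alpha\in\D\setminus\{0\}$; the $(2,1)$-constraint is equivalent to $(\det F)'(0)=-\alpha[F'(0)]_{21}=x_n$. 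In the even case $n=2k$ the same $F$ serves for $B_1,\dots,B_{k-1}$, and an auxiliary $B_k$ with both diagonals equal to $([F]_{11}+[F]_{22})/2$ and $\det B_k=\det F$ is built by directly choosing off-diagonal entries satisfying $[B_k]_{12}[B_k]_{21}=([F]_{11}-[F]_{22})^2/4+[F]_{12}[F]_{21}$ while preserving $\|B_k(\lambda)\|<1$.

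To solve the one-point Schur--Pick problem for $F$, I write $F(\lambda)=\M_{-Z_0}(\lambda G(\lambda))$ for an unknown Schur matrix function $G$. Expanding $\M_{-Z_0}$ at $0$ gives $F'(0)=D_{Z_0^*}\,G(0)\,D_{Z_0}$ with $D_{Z_0}=\operatorname{diag}(1,\sqrt{1-|\alpha|^2})$ and $D_{Z_0^*}=\operatorname{diag}(\sqrt{1-|\alpha|^2},1)$, so the three required entries of $F'(0)$ prescribe the $(1,1)$, $(2,2)$, and $(2,1)$ entries of $G(0)$, leaving $[G(0)]_{12}$ free. Parrott's theorem (Theorem \ref{parrot}) then permits a choice of $[G(0)]_{12}$ giving $\|G(0)\|\le 1$ precisely when the first column and the second row of $G(0)$ are each Euclidean contractions; after substituting the entry formulas this collapses to the single scalar inequality
\[
\frac{M^2}{1-|\alpha|^2}+\frac{c^2}{|\alpha|^2}\;\le\;1,\qquad M:=\max\{|x_1|,|x_{n-1}|\}/n,\;c:=|x_n|.
\]
Regarded as a quadratic in $|\alpha|^2$, its discriminant factors as $\bigl((1-c)^2-M^2\bigr)\bigl((1+c)^2-M^2\bigr)$, which is nonnegative exactly when $M+c\le 1$; since the quadratic also satisfies $p(0)=c^2\ge 0$ and $p(1)=M^2\ge 0$, both roots lie in $[0,1]$ and an admissible $\alpha\in(0,1)$ exists. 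Choosing $G\equiv G(0)$ constant then yields the required $F$, and assembling $\vp$ coordinate-by-coordinate (together with $B_k$ in the even case) completes the construction. The principal technical obstacle is precisely this Parrott-and-discriminant step, where the scalar hypothesis $M+c\le 1$ must be parlayed into joint feasibility of two Euclidean-norm inequalities on $G(0)$; this parallels Young's construction for the tetrablock in \cite{Young-LMS}, and the boundary edge cases (such as $|x_n|=1$) are handled by the obvious direct choice $\vp(\lambda)=(0,\dots,0,x_n\lambda)$.
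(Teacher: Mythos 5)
Your overall strategy is the same as the paper's for sufficiency (reduce to a one-point matricial Schur problem for a single $2\times 2$ function $F$ with $F(0)=\begin{bmatrix}0&\alpha\\0&0\end{bmatrix}$, solve it with the matrix M\"obius map $\mathcal M_{-Z}$ plus Parrott, then replicate $F$ across the $[n/2]$ blocks for $n>3$), and your necessity argument is a clean variant: instead of invoking the two-point Theorem \ref{Schwarz Gn} and a limiting/L'Hospital computation as the paper does, you apply the scalar Schwarz lemma to $\lambda\mapsto \Phi_j(z,\vp(\lambda))$ for each fixed $z\in\overline\D$. That works, but justify the bound $|\Phi_j(z,\vp(\lambda))|<1$ from condition $(2)$ of Theorem \ref{char G 3} (which, for fixed $z$, says exactly that the linear function of $w$ has no zero in $\overline\D$, i.e. $|\binom{n}{j}q z-y_{n-j}|<|\binom{n}{j}-y_jz|$, and by the $z\leftrightarrow w$ symmetry the same with $j$ and $n-j$ exchanged) rather than from condition $(3)$, whose ``either\,\dots\,or'' does not by itself give the inequality for the particular index you need. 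Your feasibility analysis for $\alpha$ (quadratic in $|\alpha|^2$, discriminant $((1-c)^2-M^2)((1+c)^2-M^2)$) replaces the paper's explicit choice $\sigma=\sqrt{1-|x_1|/3}$ and is correct in the generic case.

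Two genuine soft spots remain. First, the even case: you posit an auxiliary \emph{analytic} Schur function $B_k$ with equal diagonal entries $([F]_{11}+[F]_{22})/2$, $\det B_k=\det F$, and $\|B_k(\lambda)\|<1$, ``built by directly choosing off-diagonal entries'' with prescribed product. That existence claim is not proved and is not obvious (factoring the prescribed analytic product into two analytic factors while controlling the operator norm is exactly the nontrivial point); moreover it is unnecessary. Condition $(5)$ of Theorem \ref{char G 3} is a pointwise criterion, so you only need, for each fixed $\lambda$, \emph{some} contraction with those diagonals and determinant; equivalently, check condition $(4)$ at the middle index by the triangle inequality, $2\left|\tfrac{f_{11}+f_{22}}{2}-\overline{\left(\tfrac{f_{11}+f_{22}}{2}\right)}\det F\right|\le |f_{11}-\bar f_{22}\det F|+|f_{22}-\bar f_{11}\det F|$, which is how the paper's map $\pi_{2k}$ (middle coordinate $=\binom{n}{n/2}\tfrac{[F]_{11}+[F]_{22}}{2}$, same $F$ in every slot) handles it. Second, your claim that an admissible $\alpha\in(0,1)$ always exists when $M+c\le 1$ fails in the degenerate case $M=1$ (i.e. $|x_1|=n$ or $|x_{n-1}|=n$, which forces $x_n=0$): there the quadratic reduces to $t^2\le 0$, so no $\alpha\neq 0$ works; this case needs the separate direct choice $F(\lambda)=\lambda\,\mathrm{diag}(x_1/n,\,x_{n-1}/n)$. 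The edge case you do single out, $|x_n|=1$ with $x_1=x_{n-1}=0$, is a different one (and is handled correctly by your $\vp(\lambda)=(0,\dots,0,x_n\lambda)$); note the paper's own choice $\sigma=\sqrt{1-|x_1|/3}$ silently has the same $|x_1|=3$ degeneracy. With these two repairs your argument is complete.
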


\begin{proof}

We already know that $\widetilde{\mathbb G_2}=\mathbb G_2$ and this theorem was proved by Agler and Young for $\mathbb G_2$ (see Theorem 1.1 in \cite{AY-BLMS}). For this reason we shall consider $n\geq 3$ when proving the converse part of this theorem.\\

Let $\vp : \D \longrightarrow \Gn$ is an analytic map such that $\vp(0)=(0,\dots,0)$ and $\vp'(0)=x$. Write $\vp = (\vp_1,\dots,\vp_n)$. Then, by Theorem \ref{Schwarz Gn}, for each $\lm \in \D \setminus \{0\}$  we have 
	\[
	\max_{1\leq j \leq n-1} \ls \n \Phi_j(.,\vp(\lm)) \n_{H^{\infty}} \rs \leq |\lm| , 
	\]
	which is same as saying
	\begin{align}\label{max 1}
	\nonumber	\max_{1\leq j \leq n-1} &  \Bigg\{ \dfrac{{n \choose j} \left|\vp_j(\lm)- \bar{\vp}_{n-j}(\lm) \vp_n(\lm) \right| + \left|\vp_j(\lm) \vp_{n-j}(\lm) - {n \choose j}^2 \vp_n(\lm) \right|}{{n \choose j}^2 - |\vp_{n-j}(\lm)|^2 }  \Bigg\} \\
		 \leq & |\lm|,
	\end{align}
	for each $\lm \in \D \setminus \{0\}$. Note that, using L-Hospital's rule and the fact that $\vp(0)=(\vp_1(0),\dots,\vp_n(0))=(0,\dots,0)$, we have
	\begingroup
	\allowdisplaybreaks
	\begin{align*}
		&\lim\limits_{\lm \rightarrow 0} \dfrac{{n \choose j} \left|\vp_j(\lm)- \bar{\vp}_{n-j}(\lm) \vp_n(\lm) \right| }{|\lm| \lf {n \choose j}^2 - |\vp_{n-j}(\lm)|^2 \rf }\\
		=& \nj \left| \lim\limits_{\lm \rightarrow 0} \df{\vp_j(\lm)- \bar{\vp}_{n-j}(\lm) \vp_n(\lm)}{\lm} \right| \lim\limits_{\lm \rightarrow 0}\df{1}{ \lf {n \choose j}^2 - |\vp_{n-j}(\lm)|^2 \rf }
	%	& =\df{\nj}{\nj^2} \left| \lim\limits_{\lm \rightarrow 0} \left( \vp_j'(\lm) - \bar{\vp}_{n-j}(\lm)\vp_n'(\lm) - \bar{\vp}_{n-j}'(\lm)\vp_n(\lm) \right) \right|\\
		 = \df{|\vp_j'(0)|}{\nj}
	\end{align*}
	\endgroup
	and
	\begingroup
	\allowdisplaybreaks
	\begin{align*}
		& \lim\limits_{\lm \rightarrow 0}\df{|\vp_j(\lm)\vp_{n-j}(\lm) - \nj^2 \vp_n(\lm)|}{|\lm| \lf {n \choose j}^2 - |\vp_{n-j}(\lm)|^2 \rf }
%		& = \left|\lim\limits_{\lm \rightarrow 0}\df{\vp_j(\lm)\vp_{n-j}(\lm) - \nj^2 \vp_n(\lm)}{\lm}\right| \lim\limits_{\lm \rightarrow 0}\df{1}{\lf {n \choose j}^2 - |\vp_{n-j}(\lm)|^2 \rf}\\
%		& = \df{1}{\nj^2} \left| \lim\limits_{\lm \rightarrow 0} \left(\vp_j'(\lm)\vp_{n-j}(\lm)+ \vp_j(\lm)\vp_{n-j}'(\lm) - \nj^2 \vp_n'(\lm) \right) \right|\\
		 = \df{1}{\nj^2} \left|-\nj^2 \vp_n'(0) \right| = |\vp_n'(0)|.
	\end{align*}
	\endgroup
	So 
	$$  \lim\limits_{\lm \rightarrow 0} \df{\nj \left|(\vp_j - \bar{\vp}_{n-j}\vp_n)(\lm) \right| + \left|(\vp_j\vp_{n-j} -\nj^2 \vp_n)(\lm) \right| }{ |\lm| \lf {n \choose j}^2 - |\vp_{n-j}(\lm)|^2 \rf } = \df{|\vp_j'(0)|}{\nj} + |\vp_n'(0)|. $$
	Since inequality $\eqref{max 1}$ is true for all $\lm \in \D \setminus \{0\}$, by dividing both side of $\eqref{max 1}$ by $|\lm|$ and letting $\lm \longrightarrow 0$, we have 
	$$ \max_{1\leq j \leq n-1}\left\{ \df{|\vp_j'(0)|}{\nj} + |\vp_n'(0)| \right\} \leq 1. $$
%	that is,
%	$$ \max_{1\leq j \leq n-1}\left\{ \df{|\vp_j'(0)|}{\nj}  \right\} + |\vp_n'(0)| \leq 1 .$$
	Since $\vp'(0)=x$, that is, $(\vp_1'(0), \dots, \vp_n'(0))= (x_1,\dots,x_n)$, we have
	\[ \max_{1\leq j \leq n-1}\left\{ \df{|x_j|}{\nj}\right\} + |x_n| \leq 1 .
	\]

\noindent We divide the converse part into two cases, $n=3$ and $n>3$.\\
	
\noindent \textbf{Case-I.} Suppose $n =3$ and condition $\eqref{condition-yn}$ holds for $x=(x_1,x_2,x_3) \in \mathcal K_3$, that is,
	\begin{equation}\label{condition-y}
		\max\Bigg\{ \df{|x_1|}{3}, \df{|x_2|}{3} \Bigg\} + |x_3| \leq 1. 
	\end{equation}
We show that there exists an analytic map $\vp : \D \longrightarrow \G$ such that $\vp(0)=(0,0,0)$ and $\vp'(0)=(x_1,x_2,x_3)$. We shall follow similar technique as in Theorem 2.1 of \cite{Young-LMS} to construct such a function. We first assume that $|x_2| \leq |x_1|$. So, if $x_1=0$, then we have $x_2=0$ and hence from the inequality $\eqref{condition-y}$, we obtain $|x_3|\leq 1$. Now consider the function $\vp(\lm)=(0,0,\lm x_3)$. Clearly $\vp$ is analytic and satisfies 
	\[
	\vp(0)= (0,0,0) \q \text{and} \q \vp'(0)= \vp'(\lm)|_{\lm =0} = (0,0,x_3)=x.
	\]
Next, assume $x_1 \neq 0$. According to Theorem \ref{char G 3}, for any $W\in \mathcal S_{2 \times 2}$ the function $\vp =\pi_3 \circ W = (3W_{11},3W_{22},W_{11}W_{22}-W_{12}W_{21})$ is an analytic map from $\D$ to $\G$. Hence it is enough to show the existence of a function $W = [W_{ij}] \in \mathcal S_{2 \times 2}$ such that $(\pi_3 \circ W)(0) = (0,0,0)$ and $(\pi_3 \circ W)'(0)=x$. Suppose $W$ is a $2 \times 2$ matrix valued function such that
	\begin{equation}\label{F(zeta)}
		W(0) = 
		\begin{bmatrix}
			0 & \sigma \\
			0 & 0
		\end{bmatrix},
	\end{equation}
	where $\sigma \in \D$ will be chosen later. Then, $\vp(0)=(\pi_3 \circ W)(0) = (0,0,0)$ and 
	\begingroup
	\allowdisplaybreaks
	\begin{align*}
		\vp'(0) 
		&= (3W_{11},3W_{22},W_{11}W_{22}-W_{12}W_{21})'(0) 
	%	&= (3W_{11}',3W_{22}',W_{11}'W_{22} + W_{11}W_{22}'-W_{12}'W_{21} -W_{12}W_{21}')(0) \\
		 = (3W_{11}',3W_{22}', -\sigma W_{21}')(0).
	\end{align*}
	\endgroup
	Accordingly, $\vp'(0) = x$ if and only if  
	\begin{equation}\label{F'(zeta)}
		W'(0) = 
		\begin{bmatrix}
			x_1/3 & * \\
			-x_3/\sigma & x_2/3
		\end{bmatrix}.
	\end{equation}
	We shall find a function $W \in \mathcal S_{2 \times 2}$ that satisfies equations $\eqref{F(zeta)}$ and $\eqref{F'(zeta)}$.\\
	
	For any $W \in \mathcal S_{2 \times 2}$, we have 
	\[
	\lf \M_Z \circ W \rf (\lm) = -Z + D_{Z^*}W(\lm) \left( 1-Z^*W(\lm) \right)^{-1}D_Z \q \text{for } \lm\in \D
	\]
	and
	\begingroup
	\allowdisplaybreaks
	\begin{align}\label{M-Z'}
		(\M_Z \circ W)' 
%		\nonumber & = D_{Z^*}[W'(1-Z^*W)^{-1} + W(1-Z^*W)^{-1}Z^*W'(1-Z^*W)^{-1}]D_Z \\
		\nonumber &= D_{Z^*}[1 + W(1-Z^*W)^{-1}Z^*]W'(1-Z^*W)^{-1}D_Z\\
%		\nonumber &= D_{Z^*}[1 + ( W^{-1} -Z^*)^{-1}Z^*]W'(1-Z^*W)^{-1}D_Z\\
%		\nonumber &= D_{Z^*}[1 + (1 -WZ^*)^{-1}WZ^*]W'(1-Z^*W)^{-1}D_Z\\
		&= D_{Z^*}(1-WZ^*)^{-1}W'(1-Z^*W)^{-1}D_Z .
	\end{align}
	\endgroup
	For a fixed $\sigma \in \D$, let
	\begin{equation}\label{Z-zeta}
		Z = 
		\begin{bmatrix}
			0 & \sigma \\
			0 & 0
		\end{bmatrix}.
	\end{equation}
	Then $\n Z \n <1$,
	\begin{equation*}
		D_Z = (1-Z^*Z)^{\frac{1}{2}} 
		= \begin{bmatrix}
		1 & 0 \\
		0 & (1-|\sigma|^2)^{\frac{1}{2}}
		\end{bmatrix}
		\text{ and }
		 D_{Z^*} 
		 = \begin{bmatrix}
		 	(1-|\sigma|^2)^{\frac{1}{2}} & 0 \\
		 	0 & 1
		 \end{bmatrix}.
	\end{equation*}
%	Therefore,
%	\begin{equation*}
%		D_Z= \begin{bmatrix}
%			1 & 0 \\
%			0 & (1-|\sigma|^2)^{\frac{1}{2}}
%		\end{bmatrix}.
%	\end{equation*}
%	Similarly
%	$$ D_{Z^*} 
%	= \begin{bmatrix}
%	(1-|\sigma|^2)^{\frac{1}{2}} & 0 \\
%	0 & 1
%	\end{bmatrix}.$$
	Now if $W \in \mathcal S_{2 \times 2}$ satisfies $\eqref{F(zeta)}$ and $\eqref{F'(zeta)}$, then $W(0)=Z$ and hence we have 
	\begingroup
	\allowdisplaybreaks
	\begin{align}\label{M_F'}
		\nonumber (\M_Z \circ W)'(0)& = D_{Z^*}(1-ZZ^*)^{-1}W'(0)(1-Z^*Z)^{-1}D_Z \\
%		\nonumber &= D_{Z^*}^{-1}W'(0)D_Z^{-1}\\
%		\nonumber &= \begin{bmatrix}
%			(1-|\sigma|^2)^{-\frac{1}{2}} & 0 \\
%			0 & 1
%		\end{bmatrix}
%		\begin{bmatrix}
%			x_1/3 & W_{12}'(0) \\
%			-x_3/\sigma & x_2/3
%		\end{bmatrix}
%		\begin{bmatrix}
%			1 & 0 \\
%			0 & (1-|\sigma|^2)^{-\frac{1}{2}}
%		\end{bmatrix}\\
		&=\begin{bmatrix}
			\df{x_1}{3(1-|\sigma|^2)^{\frac{1}{2}}} & \df{W_{12}'(0)}{1-|\sigma|^2} \\
			\\
			\df{-x_3}{\sigma} & \df{x_2}{3(1-|\sigma|^2)^{\frac{1}{2}}}
		\end{bmatrix}.
	\end{align}
	\endgroup
	Note that if $W \in \mathcal S_{2 \times 2}$ then by $\eqref{F(zeta)}$ and $\eqref{Z-zeta}$, the map
$
	\M_Z \circ W : \D \longrightarrow R_I(2,2)
$
	satisfies the following condition
	$$(\M_Z \circ W)(0)=\M_Z(W(0))=\M_Z(Z)=0 .$$
	Therefore, by Schwarz lemma for $R_I(2,2)$, we have $\n (\M_Z \circ W)'(0) \n < 1$. Thus, if there exists a function $W$ in $\mathcal S_{2 \times 2}$ which satisfies $\eqref{F(zeta)}$ and $\eqref{F'(zeta)}$, then the matrix in the right hand side of the equation $\eqref{M_F'}$ must be a strict contraction. \\	
	Now choose $\sigma = \sqrt{1-\df{|x_1|}{3}}$. By \eqref{condition-y}, $|x_3|< 1$ hence $\sigma \in \D$. For a $\rho \in \C$, define a matrix $B_{\rho}$ by
	\begin{equation}\label{Y-zeta}
		B_{\rho}=\begin{bmatrix}
			\df{x_1}{\sqrt{3|x_1|}} & \rho \\
			\\
			\df{-x_3}{\sqrt{1-\df{|x_1|}{3}}} & \df{x_2}{\sqrt{3|x_1|}}
		\end{bmatrix}.  
	\end{equation}
	For a fixed $\rho$ (which is to be determined), define a function
	\begin{equation}\label{func V}
		V_{\rho}(\lm)= \lm B_{\rho}\,, \quad \lm \in \D.
	\end{equation} 
	Then,
	$V_{\rho}(0)=\begin{bmatrix}
	0 & 0 \\
	0 & 0
	\end{bmatrix}$ and $V_{\rho}'(0)=B_{\rho}$. We define $B_{\rho}$ in such a fashion because, for a suitable choice of $\rho$, the matrix $V_{\rho}'(0)$ is analogous to the matrix in equation $\eqref{M_F'}$ with above choice of $\sigma$. Since we have assumed $|x_2| \leq |x_1|$ and since condition $\eqref{condition-y}$ holds, we have that $|x_1|/3 + |x_3| \leq 1$. Thus, the norm of first column of $B_{\rho}$ is equal to
	\begin{align*} 
	\Bigg( \df{|x_1|}{3} + \df{|x_3|^2}{(1-|x_1|/3)} \Bigg)^{\frac{1}{2}} \leq 
	\Bigg( \df{|x_1|}{3} + \df{|x_3|(1-|x_1|/3)}{(1-|x_1|/3)} \Bigg)^{\frac{1}{2}} 
	&=\Bigg( \df{|x_1|}{3} + |x_3| \Bigg)^{\frac{1}{2}} \\
	&\leq 1 
	\end{align*}
	and also the norm of the second row of $B_{\rho}$ is
	\[
	\lf \df{|x_3|^2}{(1-|x_1|/3)} + \df{|x_2|^2}{3 |x_1|} \rf^{\frac{1}{2}} \leq \df{|x_3|^2}{(1-|x_1|/3)} + \df{|x_1|^2}{3 |x_1|} \leq 1 \: \: (\text{since } |x_2| \leq |x_1|).
	\] 
	Then, by Theorem \ref{parrot}, there exists a $\rho \in \C$ such that $\n B_{\rho} \n \leq 1$. Consequently, there exists a $\rho \in \C$ such that $V_{\rho} \in \mathcal S_{2 \times 2}$. A choice of such $\rho$ is 
	\begin{equation}\label{xi}
		\rho = \df{x_1x_2\bar{x}_3\sqrt{3-|x_1|}}{\sqrt{3}|x_1|(3-|x_1|-3|x_3|^2)}\;.
	\end{equation} 
	Now we define a function $W = \M_{-Z} \circ V_{\rho}$, where $Z$ is as in $\eqref{Z-zeta}$. Then $W \in \mathcal S_{2 \times 2}$ and $W(0)= \M_{-Z}(0)=Z$. So $W$ satisfies equation $\eqref{F(zeta)}$. Since $V_{\rho}(0)$ is the zero matrix and $V_{\rho}'(0)=B_{\rho}$, by equation $\eqref{M-Z'}$ we have
	\begingroup
	\allowdisplaybreaks 
	\begin{align*}
		W'(0)
%		&= (\M_{-Z} \circ V_{\rho})'(0) \\
		&= D_{Z^*}(1+V_{\rho}(0)Z^*)^{-1}V_{\rho}'(0)(1+Z^*V_{\rho}(0))^{-1}D_Z  \\
		&= D_{Z^*}V_{\rho}'(0)D_Z = \begin{bmatrix}
			x_1/3 & \rho|x_1|/3 \\
			-x_3/\sigma & x_2/3
		\end{bmatrix}.
	\end{align*}
	\endgroup
	Hence the function $W$ also satisfies equation $\eqref{F'(zeta)}$. Thus, there exists a function $W \in \mathcal S_{2 \times 2}$ that satisfies $\eqref{F(zeta)}$ and $\eqref{F'(zeta)}$. The case $|x_1| \leq |x_2|$ can be dealt in similar way. Hence condition \eqref{condition-y} is sufficient for the existence of an analytic map $\vp : \D \longrightarrow \G$ such that $\vp(0)=(0,\dots,0)$ and $\vp'(0)=(x_1,x_2,x_3)$.\\
	
\noindent \textbf{Case-II.} Let $n>3$. First assume that $n$ is odd. Suppose $x = (x_1, \dots, x_n) \in \mathcal K_n$. Then for all $j = 2, \dots, \Big[\dfrac{n}{2} \Big]$, we have $ x_j = \dfrac{\nj x_1}{n}$  and $x_{n-j} = \dfrac{\nj x_{n-1}}{n}$.
%\[
%x_j = \dfrac{\nj x_1}{n}  \; \text{ and } \; x_{n-j} = \dfrac{\nj x_{n-1}}{n} .
%\]
Therefore, condition \eqref{condition-yn} reduces to 
	\begin{equation*}
	\max\ls \df{|x_1|}{n}, \df{|x_{n-1}|}{n} \rs + |x_n| \leq 1 \q \Leftrightarrow \q  \max\ls \df{\left| \frac{3 x_1}{n} \right|}{3}, \df{\left| \frac{3 x_{n-1}}{n} \right|}{3} \rs + |x_n| \leq 1.
	\end{equation*}
%	which is same as saying that
%	\begin{equation*}
%	\max\Bigg\{ \df{\left| \frac{3 x_1}{n} \right|}{3}, \df{\left| \frac{3 x_{n-1}}{n} \right|}{3} \Bigg\} + |x_n| \leq 1. 
%	\end{equation*}
	Hence by Case-I, there exists a function $\widehat W \in \mathcal S_{2 \times 2}$ such that
	\begin{equation}\label{widehat W}
	\widehat W(0) = 
	\begin{bmatrix}
	0 & \sigma \\
	0 & 0
	\end{bmatrix}
	 \text{ and } 
	 \widehat W'(0) = 
	 \begin{bmatrix}
	   x_1/n & * \\
	   -x_n/\sigma & x_{n-1}/n
	 \end{bmatrix}.	  
	\end{equation}
	Now consider $\Big[ \dfrac{n}{2} \Big]$ number of $2 \times 2$ matrix valued functions $W_1, \dots, W_{[\frac{n}{2}]}$, where $W_j (\lm) = \widehat W(\lm)$ for each $j= 1,\dots, \Big[ \dfrac{n}{2} \Big]$.  Since $n$ is odd, $2[\frac{n}{2}] +1 = n$. So, we have
	\begin{align*}
	 &\big( \pi_{2[\frac{n}{2}] +1} \circ ( W_1, \dots, W_{[\frac{n}{2}]} ) \big)'(0)\\
%	 & = \lf n [W_1]_{11}, \dots, {n \choose [\frac{n}{2}]} [W_{[\frac{n}{2}]}]_{11}, {n \choose [\frac{n}{2}]} [W_{[\frac{n}{2}]}]_{22}, \dots, n [W_1]_{22}, \det \widehat W  \rf'(0) \\
%	 & = \lf  n \widehat W'_{11}, \dots, {n \choose [\frac{n}{2}]} \widehat W'_{11}, {n \choose [\frac{n}{2}]}\widehat  W'_{22}, \dots, n \widehat W'_{22}, - \sigma \widehat W'_{21} \rf(0)\\
	  = & \Big( x_1, \dots, \df{{n \choose [\frac{n}{2}]}}{n} x_1, \df{{n \choose [\frac{n}{2}]}}{n} x_{n-1}, \dots, x_{n-1}, x_n \Big) 
	 = x 
	\end{align*}
	and 
	\begin{align*}
	\lf \pi_n \circ ( W_1, \dots, W_{[\frac{n}{2}]} ) \rf(0) = \pi_{2[\frac{n}{2}] +1} \lf \widehat W(0), \dots, \widehat W(0) \rf = (0,\dots,0).
	\end{align*}
	Note that, each $W_j \in \mathcal S_{2 \times 2}$ and $\det W_i=\det W_j$ for each $i,j$. Clearly, the function $\vp = \pi_n \circ \lf W_1, \dots, W_{[\frac{n}{2}]} \rf$ is analytic which maps $\D$ into $\Gn$.\\
	
Now suppose $n$ is even. In this case, $x = (x_1, \dots, x_n) \in \mathcal K_n^{even}$. Then, $x_j = \dfrac{\nj x_1}{n} $, $x_{n-j} = \dfrac{\nj x_{n-1}}{n} $ for all $j = 2, \dots, [\frac{n}{2}]-1$ and $x_{[\frac{n}{2}]} = {n \choose [\frac{n}{2} ]} \df{x_1 + x_{n-1}}{2n}$. Hence condition \eqref{condition-yn} is reduced to 
	\begin{equation*}
	\max\Bigg\{ \df{|x_1|}{n}, \df{|x_{n-1}|}{n}, \df{|x_1|+|x_{n-1}|}{2n} \Bigg\} + |x_n| \leq 1,
	\end{equation*}
	which is same as
	\begin{equation*}
	\max\Bigg\{ \df{|x_1|}{n}, \df{|x_{n-1}|}{n} \Bigg\} + |x_n| \leq 1. 
	\end{equation*}
So in a similar fashion as if $n$ is odd, there exists a function $\widehat W \in \mathcal S_{2 \times 2}$ which satisfies condition $\eqref{widehat W}$. Again consider $ \dfrac{n}{2}$ number of $2 \times 2$ matrix valued functions $W_1, \dots, W_{\frac{n}{2}}$, where $W_j (\lm) = \widehat W(\lm)$ for each $j= 1,\dots, \frac{n}{2}$. So, we have
	\begin{align*}
	&\lf \pi_{2 [\frac{n}{2}]} \circ \lf W_1, \dots, W_{\frac{n}{2}} \rf \rf'(0) \\
%	\qq \qq \qq \qq \qq &\\
%	 = \Bigg(  n \widehat W'_{11}, \dots, {n \choose \frac{n-2}{2}} \widehat W'_{11}, {n \choose \frac{n}{2}} \df{\widehat W'_{11} + \widehat  W'_{22} }{2}, {n \choose \frac{n-2}{2}}& \widehat W'_{22}, \dots,\\
%	& n \widehat W'_{22}, - \sigma \widehat W'_{21} \Bigg)(0)\\
	 &= \Bigg( x_1, \dots, \df{{n \choose \frac{n}{2}-1}}{n} x_1, {n \choose \frac{n}{2}} \df{x_1 + x_{n-1}}{2}, \df{{n \choose \frac{n}{2}-1}}{n} x_{n-1}, \dots    , x_{n-1}, x_n \Bigg) = x 
	\end{align*}
	and 
	\begin{align*}
	\lf \pi_n \circ \lf W_1, \dots, W_{\frac{n}{2}} \rf \rf(0)   = (0,\dots,0).
	\end{align*}
	The function $\vp = \pi_n \circ \lf W_1, \dots, W_{\frac{n}{2}} \rf$ is analytic and it maps $\D$ into $\mathcal K_n^{even} \cap \Gn$. Thus, for any $n \in \mathbb N$ if condition \eqref{condition-yn} holds for $x \in \mathcal K_n$ then there exists an analytic map $\vp : \D \longrightarrow \Gn$ such that $\vp(0)=(0,\dots,0)$ and $\vp'(0)=x$. The proof is now complete.
	
\end{proof}

\noindent \textbf{Remark.} In particular if $n=3$, then the condition \eqref{condition-yn} is necessary and sufficient for the existence of such an interpolating function $\vp$ for any $x \in \C^3$.\\

\noindent The following is a Schwarz type lemma for the symmetrized polydisc.

\begin{thm}\label{Schwarz-lemma-gn-2}
Let $x=(x_1,\dots,x_n)	\in \C^n$. If there exists an analytic map $\vp : \D \longrightarrow \gn$ such that $\vp(0)=(0,\dots,0)$ and $\vp'(0)=x$, then
\[
\max_{1\leq j \leq n-1}\left\{ \df{|x_j|}{\nj}\right\} + |x_n| \leq 1. 
\]	
\end{thm}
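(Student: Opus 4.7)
The plan is to deduce this theorem as an immediate corollary of Theorem \ref{Schwarz-lemma-Gn-2}, exploiting the inclusion $\gn \subseteq \Gn$ that was recorded in the introduction. Indeed, Costara's description of $\gn$ shows that if $(\be_1, \dots, \be_{n-1}) \in \mathbb G_{n-1}$ then $|\be_j| + |\be_{n-j}| < \binom{n}{j}$, so every $y = (s_1,\dots, s_{n-1}, p) \in \gn$ automatically lies in $\Gn$. Consequently, an analytic map $\vp: \D \to \gn$ is in particular an analytic map $\vp: \D \to \Gn$.

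The key step is then simply to invoke the necessity direction of Theorem \ref{Schwarz-lemma-Gn-2}: given that $\vp : \D \to \Gn$ is analytic with $\vp(0) = (0,\dots,0)$ and $\vp'(0) = x = (x_1,\dots,x_n)$, that theorem already yields the desired estimate
\[
\max_{1 \leq j \leq n-1} \left\{ \df{|x_j|}{\nj} \right\} + |x_n| \leq 1.
\]
Since $\vp$ landing in $\gn$ is a stronger hypothesis than landing in $\Gn$, the same conclusion holds in our setting.

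There is no genuine obstacle here, as all the analytic work (namely the limit computation involving $\Phi_j(\cdot, \vp(\lm))$ as $\lm \to 0$ together with Theorem \ref{Schwarz Gn}) has already been carried out in the proof of Theorem \ref{Schwarz-lemma-Gn-2}. The point worth emphasizing in the writeup is only the logical reduction: we are using the necessity half of Theorem \ref{Schwarz-lemma-Gn-2}, which places no restriction on $x$ (in particular, no requirement that $x \in \mathcal K_n$), so it applies verbatim to any $x \in \C^n$ realized as $\vp'(0)$ for some $\vp : \D \to \gn \subseteq \Gn$ with $\vp(0) = 0$. The converse direction for $\gn$ (analogous to $x \in \mathcal K_n$ giving an interpolating map into $\Gn$) is not asserted here and would require separate analysis, since the interpolating maps constructed in the proof of Theorem \ref{Schwarz-lemma-Gn-2} land in $\Gn$ but need not land in the smaller domain $\gn$.
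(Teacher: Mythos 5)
Your proposal is correct and follows exactly the paper's own argument: the paper also deduces the statement directly from the necessity direction of Theorem \ref{Schwarz-lemma-Gn-2} via the inclusion $\gn \subset \Gn$. Your additional remarks about why the restriction $x \in \mathcal K_n$ is irrelevant here are accurate but not needed.
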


\begin{proof}
Follows from Theorem \ref{Schwarz-lemma-Gn-2} as $\gn \subset \Gn$.
\end{proof}

\section{An explicit interpolating function}

\noindent In Theorem $\ref{Schwarz-lemma-Gn-2}$, we proved the existence of an analytic function $\vp$ mapping origin to origin and satisfying $\vp'(0)=(x_1,\dots,x_n)\in \mathcal K_n$. In this section, we show an explicit construction of such a function $\vp$.

\begin{thm}
	Let $x \in \mathcal K_n$. If a function $\vp = (\vp_1,\dots,\vp_n)$ is given by
	\begin{align*}
	\vp(\lm)
	 = \df{\lm}{ 1 + \lm \bar{x}_n r_x} \lf x_1, \dots, {n \choose [\frac{n}{2}]} \df{ x_1}{n}, {n \choose [\frac{n}{2}]} \df{x_{n-1}}{n}, \dots, x_{n-1}, x_n +\lm r_x  \rf
	\end{align*}
	when $n$ is odd, and 
	\begin{align*}
	\vp(\lm)
	& = \df{\lm }{1 + \lm \bar{x}_n r_x} \Bigg( x_1 , \dots, {n \choose \frac{n-2}{2}}  \df{ x_1}{n}, {n \choose \frac{n}{2}} \df{  x_1  +  x_{n-1}}{2 n},\\
	& \qq \qq \qq \qq \qq \qq {n \choose \frac{n-2}{2}} \df{ x_{n-1}}{ n }, \dots,  x_{n-1},  x_n +\lm r_x  \Bigg).
	\end{align*}
	when $n$ is even, where 
	\[
	r_x=\begin{cases}
	0 & \q \text{if }  x_1=0=x_{n-1} \\
	\df{x_1x_{n-1}(n-|x_1|)}{n|x_1|(n-|x_1|-n|x_n|^2)} & \text{if } |x_{n-1}| \leq |x_1| \neq 0 \\
	\\
	\df{x_1x_{n-1}(n-|x_{n-1}|)}{n|x_{n-1}|(n-|x_{n-1}|-n|x_n|^2)} & \text{if } |x_1| \leq |x_{n-1}| \neq 0.
	\end{cases}
	\]
	Then $\vp$ is an analytic map from $\D$ into $\Gn$ with $\vp(0)=(0,\dots,0)$ and $\vp'(0)=x$.
\end{thm}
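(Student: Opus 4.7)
The plan is to identify the explicit $\vp$ in the statement with the map $\pi_n \circ (\widehat W, \ldots, \widehat W)$ constructed in the proof of Theorem \ref{Schwarz-lemma-Gn-2}, where $\widehat W = \M_{-Z}\circ V_\rho \in \mathcal S_{2\times 2}$ and $Z, B_\rho, \rho$ are as in that proof. Once this identification is carried out, analyticity, the inclusion $\vp(\D) \subseteq \Gn$ and the normalisations $\vp(0) = 0$, $\vp'(0) = x$ follow at once from the construction together with Theorem \ref{char G 3}(5). Assume without loss of generality that $|x_{n-1}| \leq |x_1|$; the reverse inequality is handled by a symmetric argument (swapping the roles of $W_{11}$ and $W_{22}$). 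The degenerate case $x_1 = x_{n-1} = 0$ gives $r_x = 0$, so the formula reduces to $\vp(\lm) = (0,\ldots,0,\lm x_n)$, which plainly lies in $\Gn$ since $|x_n| \leq 1$.

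Suppose therefore $x_1 \neq 0$. Take $\sigma = \sqrt{1-|x_1|/n}$, $Z = \begin{bmatrix} 0 & \sigma \\ 0 & 0\end{bmatrix}$, and $B_\rho,\rho$ as in the proof of Theorem \ref{Schwarz-lemma-Gn-2} (with $3$ replaced by $n$ in the Case-II reduction). Because $Z$ is off-diagonal rank one, $Z^* V_\rho(\lm)$ is strictly lower triangular with $(2,2)$-entry $\lm\bar\sigma\rho$; hence $\det(1+Z^* V_\rho(\lm)) = 1+\lm\bar\sigma\rho$ and $(1+Z^* V_\rho)^{-1}$ is elementary. Computing $\widehat W(\lm) = Z + D_{Z^*}V_\rho(\lm)(1+Z^* V_\rho(\lm))^{-1}D_Z$ entrywise and using the direct algebraic identity $\bar\sigma\rho = \bar x_n r_x$ --- which is immediate from the definitions of $\sigma$, $\rho$ and $r_x$ --- one obtains
\[
\widehat W_{11}(\lm) = \df{\lm x_1/n}{1+\lm\bar x_n r_x},\qq \widehat W_{22}(\lm) = \df{\lm x_{n-1}/n}{1+\lm\bar x_n r_x},
\]
together with closed forms for $\widehat W_{12}$ and $\widehat W_{21}$ sharing the same denominator.

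The crux of the proof is the verification that
\[
\det \widehat W(\lm) = \df{\lm(x_n + \lm r_x)}{1+\lm\bar x_n r_x}.
\]
Expanding $\det \widehat W = \widehat W_{11}\widehat W_{22} - \widehat W_{12}\widehat W_{21}$ and clearing the common denominator $(1+\lm\bar x_n r_x)^2$ yields a cubic polynomial in $\lm$ in the numerator, whose coefficients involve $ad = x_1 x_{n-1}/(n|x_1|)$ and $\rho c = -\rho x_n/\sigma$. The clinching algebraic identity is $ad - \rho c = r_x$; together with a short computation of the $\lm^2$-coefficient (which produces exactly $r_x(1+|x_n|^2)$), this forces the numerator to factor as $\lm(x_n + \lm r_x)(1+\lm\bar x_n r_x)$, cancelling one power of the denominator and giving the stated form. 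This factorisation relies on the precise value of $\rho$ dictated by Parrott's theorem in Case-I, and is the main computational obstacle.

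Finally, setting $W_1 = \cdots = W_{[n/2]} = \widehat W$ and composing with $\pi_n$ --- read as $\pi_{2k+1}$ when $n = 2k+1$ and as $\pi_{2k}$ when $n = 2k$ --- reproduces exactly the formula in the statement: for $n$ odd the $j$-th coordinate ($1\leq j\leq[n/2]$) is $\binom{n}{j}\widehat W_{11} = \lm\binom{n}{j}x_1/(n(1+\lm\bar x_n r_x))$ and symmetrically for $n-j$, while for $n$ even the middle coordinate is $\binom{n}{n/2}(\widehat W_{11}+\widehat W_{22})/2 = \binom{n}{n/2}\lm(x_1+x_{n-1})/(2n(1+\lm\bar x_n r_x))$. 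Since $\widehat W \in \mathcal S_{2\times 2}$, Theorem \ref{char G 3}(5) yields $\vp(\D)\subseteq \Gn$; analyticity is inherited from $\widehat W$; and reading off the constant and linear coefficients in $\lm$ gives $\vp(0)=(0,\ldots,0)$ and $\vp'(0)=x$, completing the proof.
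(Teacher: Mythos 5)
Your proposal is correct and follows essentially the same route as the paper: the same construction $\widehat W=\M_{-Z}\circ V_\rho$ with $\sigma$, $B_\rho$, $\rho$ from Theorem \ref{Schwarz-lemma-Gn-2}, the same key identities ($\rho\sigma=\bar x_n r_x$ and the factorisation of $\det\widehat W$), and the same composition with $\pi_n$ using $\left[\frac{n}{2}\right]$ copies of $\widehat W$. The only (cosmetic) difference is that you run the matrix computation directly with the parameter $n$, whereas the paper carries it out for $n=3$ and then reduces the general case to $n=3$ via $y=\lf \frac{3x_1}{n},\frac{3x_{n-1}}{n},x_n\rf$.
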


\begin{proof}

We divide the proof into two cases, $n=3$ and $n>3$ as in Theorem \ref{Schwarz-lemma-Gn-2}. The idea and technique that are used in constructing an interpolating function $\vp$ when $n=3$ are borrowed from Theorem 2.2 in \cite{Young-LMS}.\\

\noindent \textbf{Case-I:} Suppose $n=3$. Then $\mathcal K_3 =\Ccc $. Let $x \in \Ccc$ be such that $\max\Bigg\{ \df{|x_1|}{3}, \df{|x_2|}{3} \Bigg\} + |x_3| \leq 1$. For this particular case, we denote $r_x$ by $l_x$. Then

\begin{equation}\label{formulae-vp}
\vp(\lm)= \df{\lm}{1+\lm \bar{x}_3l_x}(x_1,x_2,l_x\lm + x_3\,),
\end{equation}
where
\begin{equation}\label{formulae-C}
l_x=\begin{cases}
0 & \q \text{if }  x_1=0=x_2 \\
\df{x_1x_2(3-|x_1|)}{3|x_1|(3-|x_1|-3|x_3|^2)} & \text{if } |x_2| \leq |x_1| \neq 0 \\
\\
\df{x_1x_2(3-|x_2|)}{3|x_2|(3-|x_2|-3|x_3|^2)} & \text{if } |x_1| \leq |x_2| \neq 0.
\end{cases}
\end{equation}
	We shall show that the function $\vp$ given by $\eqref{formulae-vp}$ and $\eqref{formulae-C}$ is analytic, $\vp(\D) \subset \G$, $\vp(0)=(0,0,0)$ and $\vp'(0)=x$.
	Suppose $x_1=0=x_2$, then $|x_3| \leq 1$ and $l_x=0$. Consider the function $\vp(\lm)=(0,0,\lm x_3)$, $\lm \in \D$. Clearly $\vp$ is analytic, $\vp(0)=(0,0,0)$ and $\vp'(0)=(0,0,x_3)=x$. Now suppose $|x_2| \leq |x_1| \neq 0$. The function $\vp$, given by \eqref{formulae-vp} and \eqref{formulae-C}, clearly satisfies $\vp(0)=0$. Note that 
	\[
	\vp'(\lm)= \lf \df{x_1}{(1+\lm \bar{x}_3l_x)^2}, \df{x_2}{(1+\lm \bar{x}_3l_x)^2}, \df{x_3 + \lm l_x (2 + \lm \bar x_3 l_x)}{(1+\lm \bar{x}_3l_x)^2} \rf.
	\]
	Hence $\vp'(0)=x$. 
	We shall show that $\vp$ is analytic and $\vp(\D) \subset \G$. 	\\	
	\noindent Consider $Z$ as in equation $\eqref{Z-zeta}$ with $\sigma = \sqrt{1-\df{|x_1|}{3}}$. Then 
	\[
	D_Z= \begin{bmatrix}
	1 & 0 \\
	0 & \sqrt{\dfrac{|x_1|}{3}}
	\end{bmatrix}	
	\text{ and }	 
	D_{Z^*} 
	= \begin{bmatrix}
	\sqrt{\dfrac{|x_1|}{3}} & 0 \\
	0 & 1
	\end{bmatrix}.
	\]
Also consider $B_{\rho}$ as in equation $\eqref{Y-zeta}$, where $\rho$ is given by $\eqref{xi}$. Then $\n B_{\rho} \n \leq 1$ (as we observe in the proof of Theorem \ref{Schwarz-lemma-Gn-2}). Consider the function $W(\lm)= \M_{-Z}(\lm B_{\rho})$, $\lm \in \D$. Then $W \in \mathcal S_{2 \times 2}$. For a contraction $Z$, we know that $Z^* D_{Z^*}= D_Z Z^*$ and hence $D_Z^{-1} Z^* = Z^* D_{Z^*}^{-1}$ when $\|Z\|<1$. Here $D_Z = (1-Z^* Z)^{\frac{1}{2}}$. Thus 
	\begingroup
	\allowdisplaybreaks
	\begin{align*}
	W(\lm)
	&= Z + D_{z^*}\lm B_{\rho}(1+Z^*\lm B_{\rho})^{-1}D_Z \\
%	&= Z + D_{z^*}\lm B_{\rho}D_Z D_Z^{-1}( D_Z ^{-1}+ D_Z ^{-1} Z^*\lm B_{\rho})^{-1}\\
	%		&= Z + \lm D_{z^*} B_{\rho}D_Z (1 + \lm D_Z ^{-1} Z^* B_{\rho} D_Z )^{-1}\\
	&= Z + \lm(D_{Z^*}B_{\rho}D_Z)(1+\lm Z^*D_{Z^*}^{-1} B_{\rho} D_Z)^{-1}.
	\end{align*}
	\endgroup
	Note that,
	\begingroup
	\allowdisplaybreaks 
	\begin{align*}
	D_{Z^*}B_{\rho} D_Z
%	&=\begin{bmatrix}
%	1 & 0 \\
%	\\
%	0 & \sqrt{\dfrac{|x_1|}{3}}           %\left|\dfrac{x_1}{3} \right|^{\frac{1}{2}}
%	\end{bmatrix}
%	\begin{bmatrix}
%	\df{x_1}{\sqrt{3|x_1|}} & \rho \\
%	\\
%	\df{-x_3}{\sqrt{1-\df{|x_1|}{3}}} & \df{x_2}{\sqrt{3|x_1|}}
%	\end{bmatrix}
%	\begin{bmatrix}
%	\sqrt{\dfrac{|x_1|}{3}} & 0 \\
%	\\
%	0 & 1
%	\end{bmatrix}
%	\\
	= \begin{bmatrix}
	\df{x_1}{3} & \df{\rho|x_1|}{3} \\
	\\
	\df{-x_3}{\sigma} & \df{x_2}{3}
	\end{bmatrix}
%	\end{align*}
%	\endgroup
%	\begingroup
%	\allowdisplaybreaks	 
%	\begin{align*}
	\q \text{and} \q
	D_{Z^*}^{-1} B_{\rho} D_Z
%	&= \begin{bmatrix}
%	1 & 0 \\
%	\\
%	0 & \sqrt{\dfrac{3}{|x_1|}}          
%	\end{bmatrix}
%	\begin{bmatrix}
%	\df{x_1}{\sqrt{3|x_1|}} & \rho \\
%	\\
%	\df{-x_3}{\sqrt{1-\df{|x_1|}{3}}} & \df{x_2}{\sqrt{3|x_1|}}
%	\end{bmatrix}
%	\begin{bmatrix}
%	\sqrt{\dfrac{|x_1|}{3}} & 0 \\
%	\\
%	0 & 1
%	\end{bmatrix}	
%	\\ 
	=\begin{bmatrix}
	\df{x_1}{|x_1|} & \rho \\
	\\
	\df{-x_3}{\sigma} & \df{x_2}{3}
	\end{bmatrix}.
	\end{align*}
	\endgroup
	So,
%	\begin{align*}
%	1+\lm Z^*D_{Z^*}^{-1} B_{\rho} D_Z = 1 + \lm \begin{bmatrix}
%	0 & 0 \\
%	\sigma & 0
%	\end{bmatrix}\begin{bmatrix}
%	\df{x_1}{|x_1|} & \rho \\
%	\\
%	\df{-x_3}{\sigma} & \df{x_2}{3}
%	\end{bmatrix} 
%	= \begin{bmatrix}
%	1 & 0 \\
%	\\
%	\df{x_1\lm \sigma}{|x_1|} & 1+\lm \rho \sigma
%	\end{bmatrix},
%	\end{align*}
%	and hence
	$$(1+\lm Z^*D_{Z^*}^{-1} B_{\rho} D_Z)^{-1} 
	= \df{1}{1+\lm \rho \sigma}
	\begin{bmatrix}
	1+\lm \rho \sigma & 0 \\
	\\
	\df{-x_1\lm \sigma}{|x_1|} & 1
	\end{bmatrix}.$$
	Thus
	\begingroup
	\allowdisplaybreaks
	\begin{align}\label{F-lm}
	 W(\lm) 
%	&=\begin{bmatrix}
%	0 & \sigma \\
%	0 & 0
%	\end{bmatrix} 
%	+\df{\lm}{1+\lm \rho \sigma}
%	\begin{bmatrix}
%	\df{x_1}{3} & \df{\rho|x_1|}{3} \\
%	\\
%	\df{-x_3}{\sigma} & \df{x_2}{3}
%	\end{bmatrix}
%	\begin{bmatrix}
%	1+\lm \rho \sigma & 0 \\
%	\\
%	\df{-x_1\lm \sigma}{|x_1|} & 1
%	\end{bmatrix} \\
	=\begin{bmatrix}
	0 & \sigma \\
	0 & 0
	\end{bmatrix}
	+ \df{\lm}{1+\lm \rho \sigma}
	\begin{bmatrix}
	\df{x_1}{3} & \df{\rho|x_1|}{3} \\
	\\
	v & \df{x_2}{3}
	\end{bmatrix},
	\end{align}
	\endgroup
	where 
	\begingroup
	\allowdisplaybreaks
	\begin{align}\label{w-y}
	 v = v(\lm)  = -\df{x_3}{\sigma}(1 + \lm \rho \sigma) - \df{\lm \sigma x_1x_2}{3|x_1|} 
%	\nonumber &= -\df{x_3}{\sigma} -\lm \sigma \df{x_1x_2(3-|x_1|)}{3|x_1|(3-|x_1|-3|x_3|^2)} \\
	=-\df{x_3}{\sigma} -\lm \sigma l_x.  
	\end{align}
	\endgroup
	From $\eqref{F-lm}$ we have 
	\begingroup
	\allowdisplaybreaks
	\begin{align*}
	\det W(\lm) 
%	&=\df{\lm^2 x_1 x_2}{ 9 (1 + \lm \rho \sigma)^2} - \lf \dfrac{\lm \rho |x_1|}{3(1 + \lm \rho \sigma)} + \sigma \rf \df{ \lm v}{(1 + \lm \rho \sigma)}\\
	= \df{\lm^2 (x_1x_2 - 3v\rho |x_1|)}{9(1+\lm \rho \sigma)^2} - \df{\lm \sigma v}{(1+\lm \rho \sigma)}.
	\end{align*}
	\endgroup
	Again 
	$$ \rho \sigma = \df{x_1x_2\bar{x}_3(3 - |x_1|)}{3|x_1|(3 - |x_1| -3|x_3|^2)} = \bar{x}_3l_x \; , \q \df{\rho}{\sigma} = \df{x_1x_2\bar{x}_3}{|x_1|(3 - |x_1| -3|x_3|^2)} $$
	and thus
	\begingroup
	\allowdisplaybreaks
	\begin{align*}
	x_1x_2 - 3v\rho |x_1| 
	%	&= x_1x_2 + 3\lf \df{x_3 \rho}{\sigma} + \lm \rho \sigma l_x \rf |x_1| \\
	&=  x_1x_2 + 3 x_3 \df{x_1x_2\bar{x}_3 |x_1|}{|x_1|(3 - |x_1| -3|x_3|^2)} + 3 \lm \bar x_3 (l_x)^2 |x_1|\\
%	&= \df{x_1x_2(3 - |x_1|)}{(3 - |x_1| -3|x_3|^2)} +3\lm \bar{x}_3|x_1|(l_x)^2\\
	%	&= 3|x_1|l_x + 3\lm \bar{x}_3|x_1|(l_x)^2 
	&= 3|x_1|l_x(1+\lm \bar{x}_3l_x).
	\end{align*}
	\endgroup
	Then 
	\begingroup
	\allowdisplaybreaks
	\begin{align*}
	\det W(\lm) 
	 = \df{3 \lm^2 |x_1|l_x(1+\lm \bar{x}_3l_x)}{9(1 + \lm \bar{x}_3l_x)^2} - \df{\lm \sigma v}{(1 + \lm \bar{x}_3l_x)}
%	&= \df{\lm}{(1 + \lm \bar{x}_3l_x)} \lt  \df{\lm |x_1|l_x}{3} - \sigma \left( \df{x_3}{\sigma} + \lm \sigma l_x \right) \rt \\
	%	&= \df{\lm}{(1 + \lm \bar{x}_3l_x)} \lt  \df{\lm |x_1|l_x}{3} + x_3 + \lm \left( 1 - \df{|x_1|}{3} \right) l_x \rt \\
	 = \df{\lm}{1 + \lm \bar{x}_3l_x} (x_3 +\lm l_x).
	\end{align*}
	\endgroup
	From the equation $\eqref{F-lm}$, we have 
	\[
	[W(\lm)]_{11} = \df{\lm x_1}{3 \lf 1 + \lm \bar{x}_3l_x \rf} \q \text{and} \q [W(\lm)]_{22} = \df{\lm x_2}{ 3 \lf 1 + \lm \bar{x}_3l_x \rf }.
	\]
	Consider the function 
	$$\vp (\lm) = \pi \circ W (\lm) = \lf [3W(\lm)]_{11}, 3[W(\lm)]_{22}, \det W(\lm) \rf , \quad \lm \in \D.$$ 
	Then 
	\[
	\vp(\lm) = \lf \df{\lm x_1}{1 + \lm \bar{x}_3l_x}, \df{\lm x_2}{1 + \lm \bar{x}_3l_x} , \df{\lm}{1 + \lm \bar{x}_3l_x} (x_3 +\lm l_x)\rf ,
	\]
	which is of the form given in $\eqref{formulae-vp}$. Since $W \in \mathcal S_{2 \times 2}$, by Theorem \ref{char G 3} the map $\vp$ is an analytic and $\vp(\D) \subset \G$. The case when $|x_1| \leq |x_2| \neq 0$, can be dealt in a similar way. Hence we are done for $n=3$. \\
		
\noindent \textbf{Case-II:} Suppose $n> 3$. Let $x = (x_1, \dots, x_n) \in \mathcal K_n$. First suppose $n$ is odd. Then,  $x_j = \dfrac{\nj}{n} x_1$ and $x_{n-j} = \dfrac{\nj}{n} x_{n-1}$ for all $j = 2, \dots, \Big[ \dfrac{n}{2} \Big]$. Hence,
	\begin{equation*}
	\max_{1\leq j \leq n-1}\left\{ \df{|x_j|}{\nj}\right\} + |x_n| \leq 1 \q \Longleftrightarrow \q \max\Bigg\{ \df{|x_1|}{n}, \df{|x_{n-1}|}{n} \Bigg\} + |x_n| \leq 1.
	\end{equation*}
Consider $y = (y_1, y_2,y_3) = \lf \dfrac{3 x_1}{n}, \dfrac{3 x_{n-1}}{n}, x_n \rf$.
Therefore, by hypothesis
	\begin{equation*}
	\max\Bigg\{ \df{|y_1|}{3}, \df{|y_2|}{3} \Bigg\} + |y_3| \leq 1.
	\end{equation*}
 So, by Case-I, there exists $\widehat W \in \mathcal S_{2 \times 2} $ such that 
 \[
 [\widehat W(\lm)]_{11} = \df{\lm y_1}{3 \lf 1 + \lm \bar{y}_3l_y \rf}, \q [\widehat W(\lm)]_{22} = \df{\lm y_2}{ 3 \lf 1 + \lm \bar{y}_3l_y \rf } 
 \]
and $ \q \det \widehat W(\lm) = \df{\lm}{1 + \lm \bar{y}_3l_y} (y_3 +\lm l_y), \q$ where 
\[
	l_y=\begin{cases}
	0 & \q \text{if }  y_1=0=y_2 \\
	\df{y_1y_2(3-|y_1|)}{3|y_1|(3-|y_1|-3|y_3|^2)} & \text{if } |y_2| \leq |y_1| \neq 0 \\
	\\
	\df{y_1y_2(3-|y_2|)}{3|y_2|(3-|y_2|-3|y_3|^2)} & \text{if } |y_1| \leq |y_2| \neq 0.
	\end{cases}
\]
Substituting the values of $y$, we get the following:
\begin{align}
&(i) \q l_y = r_x  \notag\\
& (ii) \q \det \widehat W(\lm) = \df{\lm}{1 + \lm \bar{x}_n r_x} (x_n +\lm r_x)  \label{W 2} \\
& (iii) \q
[\widehat W(\lm)]_{11} = \df{\lm x_1}{n \lf 1 + \lm \bar{x}_n r_x \rf}\,, \q [\widehat W(\lm)]_{22} = \df{\lm x_{n-1}}{ n \lf 1 + \lm \bar{x}_n r_x \rf }.\label{W 3}
\end{align}
Consider the $2 \times 2$ matrix valued functions $W_1, \dots, W_{[\frac{n}{2}]}$, where $W_j (\lm) = \widehat W(\lm)$ for each $j= 1,\dots, \lt \frac{n}{2} \rt$. Then each $W_j \in \mathcal S_{2 \times 2}$. Since $n$ is odd, $2\lt \frac{n}{2} \rt +1 = n$. Therefore,
\begin{align*}
&\lf \pi_{2\lt \frac{n}{2} \rt +1} \circ \lf W_1, \dots, W_{[\frac{n}{2}]} \rf \rf(\lm)\\
% & = \lf n [W_1]_{11}, \dots, {n \choose [\frac{n}{2}]} [W_{[\frac{n}{2}]}]_{11}, {n \choose [\frac{n}{2}]} [W_{[\frac{n}{2}]}]_{22}, \dots, n [W_1]_{22}, \det \widehat W  \rf(\lm) \\
& = \df{\lm}{ \lf 1 + \lm \bar{x}_n r_x \rf} \lf  x_1 , \dots, {n \choose [\frac{n}{2}]} \df{ x_1}{n},  {n \choose [\frac{n}{2}]} \df{x_{n-1}}{ n }, \dots,  x_{n-1}, (x_n +\lm r_x)  \rf.
\end{align*}

Now suppose $n$ is even. So, $\lt \frac{n}{2} \rt=\frac{n}{2}$. In this case, $x = (x_1, \dots, x_n) \in \mathcal K_n^{even}$. Thus $x_j = \dfrac{\nj}{n} x_1$, $x_{n-j} = \dfrac{\nj}{n} x_{n-1}$ for all $j = 2, \dots, \frac{n}{2}-1$ and ${x}_{\frac{n}{2}} = { n \choose \frac{n}{2}} \df{x_1 + x_{n-1}}{2n}.$ 
Again,
\begin{align*}
\max_{1\leq j \leq n-1}\left\{ \df{|x_j|}{\nj}\right\} + |x_n| \leq 1 
%& \Leftrightarrow \max\Bigg\{ \df{|x_1|}{n}, \df{|x_{n-1}|}{n}, \df{|x_1|+|x_{n-1}|}{2n} \Bigg\} + |x_n| \leq 1 \\
 \Leftrightarrow \max\Bigg\{ \df{|x_1|}{n}, \df{|x_{n-1}|}{n} \Bigg\} + |x_n| \leq 1.
\end{align*}
%Therefore, the condition $\max_{1\leq j \leq n-1}\left\{ \df{|x_j|}{\nj}\right\} + |x_n| \leq 1$ reduces to 
%\begin{equation*}
%\max\Bigg\{ \df{|x_1|}{n}, \df{|x_{n-1}|}{n}, \df{|x_1|+|x_{n-1}|}{2n} \Bigg\} + |x_n| \leq 1,
%\end{equation*}
%which is same as
%\begin{equation*}
%\max\Bigg\{ \df{|x_1|}{n}, \df{|x_{n-1}|}{n} \Bigg\} + |x_n| \leq 1. 
%\end{equation*}
Therefore, as in the case when $n$ is odd, there exists $\widehat W \in \mathcal S_{2 \times 2} $ such that \eqref{W 2} and \eqref{W 3} hold. Now consider the $2 \times 2$ matrix valued functions $W_1, \dots, W_{\frac{n}{2}}$, where $W_j (\lm) = \widehat W(\lm)$ for each $j= 1,\dots, \frac{n}{2} $. Then
%\begingroup
%\allowdisplaybreaks
\begin{align*}
\lf \pi_{2 [\frac{n}{2}]} \circ \lf W_1, \dots, W_{\frac{n}{2}} \rf \rf(\lm) \qq &\\
% = \Bigg(  n \widehat W_{11}, \dots, {n \choose \frac{n-2}{2}} \widehat W_{11}, {n \choose \frac{n}{2}}& \df{\widehat W_{11} + \widehat  W_{22} }{2},\\ 
% &  {n \choose \frac{n-2}{2}}\widehat W_{22}, \dots, n \widehat W_{22}, \det \widehat W \Bigg)(\lm)\\
 = \df{\lm }{ \lf 1 + \lm \bar{x}_n r_x \rf} \Bigg( x_1 , \dots, {n \choose \frac{n-2}{2}} & \df{ x_1}{n}, {n \choose \frac{n}{2}} \df{  x_1  +  x_{n-1}}{2 n},\\
&  {n \choose \frac{n-2}{2}} \df{ x_{n-1}}{ n }, \dots,  x_{n-1},  x_n +\lm r_x  \Bigg).
\end{align*}
%\endgeoup
In both cases $\vp = \pi_n \circ \lf W_1, \dots, W_{[\frac{n}{2}]} \rf$. Clearly, the function $\vp$ is an analytic map from $\D$ into $\Gn$ and $\vp(0)=(0,\dots,0)$. If $n$ is odd, then
\begin{align*}
\vp'(\lm)
 = \df{1}{ \lf 1 + \lm \bar{x}_n r_x \rf^2} \Bigg(  x_1, \dots,& {n \choose [\frac{n}{2}]} \df{ x_1}{n},{n \choose [\frac{n}{2}]} \df{ x_{n-1}}{ n},\\ 
& \dots,   x_{n-1}, x_n + \lm r_x \lf 2+ \lm \bar{x}_n r_x \rf   \Bigg).
\end{align*}
%So, $\vp'(0) = \lf x_1, \dots, {n \choose [\frac{n}{2}]} \df{ x_1}{n},{n \choose [\frac{n}{2}]} \df{ x_{n-1}}{ n}, \dots, x_{n-1}, x_n  \rf =x$.\\
If $n$ is even, we have
\begin{align*}
\vp'(\lm)
 = \df{1}{ \lf 1 + \lm \bar{x}_n r_x \rf^2} \Bigg(  x_1 &, \dots,  {n \choose \frac{n-2}{2}} \df{ x_1}{n},  {n \choose \frac{n}{2}} \df{ \lf x_1 + x_{n-1} \rf}{2n },\\
& {n \choose \frac{n-2}{2}}  \df{ x_{n-1}}{n}, \dots,   x_{n-1}, x_n + \lm r_x \lf 2 + \lm \bar{x}_n r_x \rf   \Bigg).
\end{align*}
%Thus
%\[ \vp'(0) = \lf x_1, \dots,  {n \choose \frac{n-2}{2}} \df{ x_1}{n},  {n \choose \frac{n}{2}} \df{ \lf x_1 + x_{n-1} \rf}{2n },
%{n \choose \frac{n-2}{2}}  \df{ x_{n-1}}{n}, \dots,   x_{n-1}, x_n \rf = x.\]
It is evident that in either cases $\vp'(0)=x$ and the proof is complete. 

\end{proof}

\section{Geometric interplay between the members of $\Gn$ and $\widetilde{\Gamma_n}$}

\noindent In \cite{pal-roy 2}, we have witnessed several important geometric properties of $\Gn$ and $\widetilde{\Gamma_n}$, e.g., $\widetilde{\Gamma_n}$ is polynomially convex but not convex, $\Gn$ is starlike but not circled etc. In this section, we shall see some interplay between $\Gn$ (or $\widetilde{\Gamma_n}$) and $\widetilde{\mathbb G}_{n+1}$ (or, $\widetilde{\Gamma_{n+1}}$).

\begin{thm}\label{even 1}
	Let $n \in \mathbb N$. Suppose $y = (y_1,\dots, y_{n-1} , q) \in \C^n$
	\begin{itemize}[leftmargin=*]
    \item[$(1)$]
	If $n$ is an even number, then the point 
	$y \in \Gn \;($or, $\in \Gamn)$ if and only if $\hat y \in \widetilde{\mathbb{G}}_{n+1} \;($or, $\in \widetilde{\Gamma}_{n+1})$, where
	\begin{align*}
	\hat y = 
	\Big( \dfrac{n+1}{n} y_1,\dots, \dfrac{n+1}{n+1-j} & y_j, \dots, \dfrac{n+1}{\frac{n}{2}+1}y_{\frac{n}{2}},  \dfrac{n+1}{\frac{n}{2}+1}y_{\frac{n}{2}},\\ \underbrace{\dfrac{n+1}{\frac{n}{2}+2}y_{\frac{n}{2}+1}}_{(\frac{n}{2}+2)\text{-th position}}, \dots, &\underbrace{\dfrac{n+1}{n+1-j}y_{n-j}}_{n+1-j\text{-th position}}, \dots, \dfrac{n+1}{n} y_{n-1}, q \Big).
	\end{align*}	
	\item[$(2)$]
	If $n$ is an odd number, then the point 
	$y \in \Gn \;($or, $\in \Gamn)$ if and only if $y^{\ast} \in \widetilde{\mathbb{G}}_{n+1} \;($or, $\in \widetilde{\Gamma}_{n+1})$, where
	\begin{align*}
	y^{\ast} = 
	\Big( \dfrac{n+1}{n} y_1,&\dots, \dfrac{n+1}{n+1-j}y_j, \dots, \dfrac{2(n+1)}{n+3}y_{[\frac{n}{2}]}, ( y_{[\frac{n}{2}]} + y_{[\frac{n}{2}]+1} ),\\ 
	&\underbrace{\dfrac{2(n+1)}{n+3}y_{[\frac{n}{2}]+1}}_{([\frac{n}{2}]+2)\text{-th position}}, \dots, \underbrace{\dfrac{n+1}{n+1-j}y_{n-j}}_{n+1-j\text{-th position}}, \dots, \dfrac{n+1}{n} y_{n-1}, q \Big) .
	\end{align*}
\end{itemize}
\end{thm}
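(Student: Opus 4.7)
The plan is to prove both directions, in both parities, by translating everything into the elementary inequalities supplied by characterization~$(4)$ of Theorem~\ref{char G 3}: $y = (y_1, \dots, y_{n-1}, q) \in \Gn$ if and only if
\[
|y_{n-j} - \bar y_j q| + |y_j - \bar y_{n-j} q| < {n \choose j}(1 - |q|^2), \q j = 1, \dots, [n/2],
\]
with the non-strict analogue characterizing $\Gamn$. With this in hand, the proof reduces to comparing the $(n+1)$-inequalities for $\hat y$ (in the even case) or $y^\ast$ (in the odd case) against the $n$-inequalities for $y$, and is essentially driven by the identity ${n+1 \choose k} = \frac{n+1}{n+1-k}{n \choose k}$.

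First I would handle the indices $k = 1, \dots, [n/2]$ of the $(n+1)$-system. The explicit definitions of $\hat y$ and $y^\ast$ give $\hat y_k = \frac{n+1}{n+1-k} y_k$ and $\hat y_{n+1-k} = \frac{n+1}{n+1-k} y_{n-k}$, and similarly for $y^\ast$. Substituting into the $(n+1)$-inequality at index $k$ and cancelling the common factor $\frac{n+1}{n+1-k}$, which appears on both sides by the combinatorial identity, produces a statement literally equivalent to the $n$-inequality at $k$. In the even case, at the self-paired $n$-index $k = n/2$, the coordinates $\hat y_{n/2}$ and $\hat y_{n/2+1}$ are both equal to $\frac{n+1}{n/2+1} y_{n/2}$, so the left-hand side collapses to $\frac{2(n+1)}{n/2+1}|y_{n/2} - \bar y_{n/2}q|$ and the same cancellation produces the correct $n$-inequality $2|y_{n/2} - \bar y_{n/2}q| < {n \choose n/2}(1-|q|^2)$. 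This settles the even-case biconditional and also settles the backward direction in the odd case, because for odd $n$ the $n$-system is indexed by exactly $j = 1, \dots, (n-1)/2$, matching the range of $(n+1)$-indices just treated.

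The only remaining piece is the forward direction when $n$ is odd, at the new self-paired $(n+1)$-index $k = (n+1)/2$, where $y^\ast_{(n+1)/2} = y_{[n/2]} + y_{[n/2]+1}$ and the inequality to verify is
\[
2|y^\ast_{(n+1)/2} - \bar y^\ast_{(n+1)/2} q| < {n+1 \choose (n+1)/2}(1-|q|^2).
\]
For this I would use the decomposition
\[
y^\ast_{(n+1)/2} - \bar y^\ast_{(n+1)/2} q = \bigl(y_{[n/2]} - \bar y_{[n/2]+1} q\bigr) + \bigl(y_{[n/2]+1} - \bar y_{[n/2]} q\bigr),
\]
apply the triangle inequality to bound the modulus on the left by the sum of the two moduli on the right, and then invoke the $n$-inequality at $j = (n-1)/2$, which bounds that sum strictly above by ${n \choose (n-1)/2}(1-|q|^2)$. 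Multiplying by $2$ and using the special identity ${n+1 \choose (n+1)/2} = 2{n \choose (n-1)/2}$ delivers exactly the required inequality. This is the one step where the bound comes from a triangle inequality rather than pure cancellation, and it is the main (though still modest) obstacle.

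Finally, the closed-case assertions for $\Gamn$ and $\widetilde{\Gamma}_{n+1}$ follow from the identical computation with every $<$ replaced by $\leq$, using the $\leq$-analogue of characterization~$(4)$ for the closed symmetrized polydisc. That analogue can be established either by repeating the proof of Theorem~\ref{char G 3} with non-strict inequalities throughout, or by a density argument through the continuous affine maps $y \mapsto \hat y$ and $y \mapsto y^\ast$, using that the open case has already been settled.
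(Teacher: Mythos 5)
Your treatment of the open case is correct, and it takes a genuinely different (and more uniform) route than the paper's: the paper proves the even case by scaling condition $(3)$ of Theorem \ref{char G 3} (the inequality containing the product term $|y_jy_{n-j}-{n\choose j}^2q|$) and proves the odd case by bookkeeping with the defining representation $y_j=\beta_j+\bar\beta_{n-j}q$, whereas you run everything through condition $(4)$. The cancellation ${n+1\choose k}=\frac{n+1}{n+1-k}{n\choose k}$ does settle every index that is merely rescaled, including the collapsed index $k=n/2$ in the even case (where the $(n+1)$-inequality reduces exactly to the self-paired $n$-inequality $2|y_{n/2}-\bar y_{n/2}q|<{n\choose n/2}(1-|q|^2)$), and your triangle-inequality step at the new self-paired index $k=(n+1)/2$ for odd $n$, combined with ${n+1\choose (n+1)/2}=2{n\choose (n-1)/2}$, is valid and preserves strictness. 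What your version buys is uniformity (one characterization throughout); what the paper's $\beta$-representation argument buys is that it transfers to the closed domains without change.

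That last point is where your proposal has a genuine gap: the assertions about $\Gamn$ and $\widetilde{\Gamma}_{n+1}$ are not covered by your argument. The $\leq$-analogue of condition $(4)$ does \emph{not} characterize $\Gamn$: take $q=1$ and $y_j=2{n\choose j}$ for every $j$; then $y_j-\bar y_{n-j}q=0$ and $y_{n-j}-\bar y_jq=0$, so all the inequalities $\leq{n\choose j}(1-|q|^2)=0$ hold, yet $y\notin\Gamn$, because any representation $y_j=\beta_j+\bar\beta_{n-j}q$ with $|\beta_j|+|\beta_{n-j}|\leq{n\choose j}$ forces $|y_j|\leq{n\choose j}$. (This is the same phenomenon as the closed symmetrized bidisc needing the extra condition $|s|\leq 2$ alongside $|s-\bar sp|\leq 1-|p|^2$.) So ``repeat the proof with $<$ replaced by $\leq$'' does not go through, and your density fallback only delivers the forward implications: given $\hat y\in\widetilde{\Gamma}_{n+1}$ (or $y^{\ast}\in\widetilde{\Gamma}_{n+1}$), an approximating sequence in $\widetilde{\mathbb G}_{n+1}$ need not lie in the range of the map $y\mapsto\hat y$ (resp.\ $y\mapsto y^{\ast}$), so it cannot be pulled back to prove $y\in\Gamn$. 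The clean repair is to argue the closed case directly from the defining representation of $\widetilde{\Gamma}_n$ (with $|\beta_j|+|\beta_{n-j}|\leq{n\choose j}$), exactly the bookkeeping the paper uses for odd $n$ in the open case; it works verbatim with $\leq$, in both directions and both parities (at the collapsed even middle index take $\gamma_{n/2}=\gamma_{n/2+1}$, and in the converse direction define the middle $\beta$ as an average). Alternatively, invoke the closed-domain analogue of condition $(3)$ from \cite{pal-roy 2}; unlike $(4)$, the inequality with the product term survives passage to the boundary $|q|=1$.
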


\begin{proof}

$(1)$. First note that ${n+1 \choose j} = \dfrac{n+1}{n+1-j} \nj$. As $n$ is even, $\Big[ \dfrac{n}{2} \Big] = \dfrac{n}{2} = \Big[ \dfrac{n+1}{2} \Big]$. Suppose $y \in \Gn$. Then, by Theorem \ref{char G 3}, we have
	\[
	{n \choose j}\left|y_j - \bar y_{n-j} q\right| + \left|y_j y_{n-j} - {n \choose j}^2 q \right| < {n \choose j}^2 -|y_{n-j}|^2 \; \text{ for } j = 1,\dots, \frac{n}{2} \,.
	\]
	Consider the point $\hat y = \lf \hat y_1,\dots,\hat y_n, \hat q \rf$, where $\hat q = q$ and
	\[
	\hat y_j = \frac{n+1}{n+1-j} y_j, \q \hat y_{n+1-j} = \frac{n+1}{n+1-j} y_{n-j} \q \text{for }j = 1, \dots, \frac{n}{2}.
	\]

	Then, the following holds for each $j = 1,\dots, \dfrac{n}{2}$.
	\begingroup
	\allowdisplaybreaks
	\begin{align*}
	 & {n+1 \choose j} \left|\hat y_j - \bar{\hat{y}}_{n+1-j} \hat q\right| + \left|\hat y_j \hat y_{n+1-j} - {n+1 \choose j}^2 \hat q \right|\\
	 & = {n+1 \choose j} \dfrac{n+1}{n+1-j}\left|y_j - \bar y_{n-j} q\right| +  \lf \dfrac{n+1}{n+1-j} \rf^2 \left|y_j y_{n-j} - {n \choose j}^2 q \right|\\
	%	& \lf \dfrac{n+1}{n+1-j} \rf^2 \nj \left|y_j - \bar y_{n-j} q\right|  + \lf \dfrac{n+1}{n+1-j} \rf^2 \left|y_j y_{n-j} - {n \choose j}^2 q \right|\\
	= & \lf \dfrac{n+1}{n+1-j} \rf^2 \lt {n \choose j}\left|y_j - \bar y_{n-j} q\right| + \left|y_j y_{n-j} - {n \choose j}^2 q \right| \rt \\
	< & \lf \dfrac{n+1}{n+1-j} \rf^2 \lt {n \choose j}^2 -|y_{n-j}|^2 \rt\\
	= & {n+1 \choose j}^2 - |\hat y_{n+1-j}|^2 .
	\end{align*}
	\endgroup
	Therefore, by Theorem \ref{char G 3}, $\hat y \in \widetilde{\mathbb{G}}_{n+1}$.\\

Conversely, suppose $\hat y \in \widetilde{\mathbb{G}}_{n+1}$. Then for each $j = 1,\dots, \Big[ \dfrac{n+1}{2} \Big] $, we have 
   	\[
   	{n+1 \choose j}\left|\hat y_j - \bar{\hat{y}}_{n+1-j} \hat q\right| + \left|\hat y_j \hat y_{n+1-j} - {n+1 \choose j}^2 \hat q \right| < {n+1 \choose j}^2 - |\hat y_{n+1-j}|.
   	\]
   	Similarly, we have
   	\[
   	{n \choose j}\left|y_j - \bar y_{n-j} q\right| + \left|y_j y_{n-j} - {n \choose j}^2 q \right| < {n \choose j}^2 -|y_{n-j}|^2
   	\]
   	for any $j = 1,\dots,\dfrac{n}{2}$. Consequently, by Theorem \ref{char G 3}, $y \in \Gn$. In a similar fashion one can prove that $y \in \Gamn$ if and only if $\hat y \in \widetilde{\Gamma}_{n+1}$.\\ 
	
	\noindent $(2)$. Suppose $n$ is odd and suppose $y  = (y_1,\dots, y_{n-1} , q) \in \Gn$. Then $|q|<1$ and there exists $\lf \beta_1,\dots, \beta_{n-1}\rf \in \C^{n-1}$ such that  
	\[
	y_j = \beta_j + \bar{\beta}_{n-j}q, \q  y_{n-j} = \beta_{n-j} + \bar{\beta}_j q \q \text{and} \q |\beta_j|+ |\beta_{n-j}| < {n \choose j}\,,
	\]
	for each $j = 1,\dots, \lt \dfrac{n}{2}\rt $. 
	Since $n$ is odd, $\lt \dfrac{n}{2}\rt = \dfrac{n-1}{2} $ and $\lt \dfrac{n+1}{2} \rt = \dfrac{n+1}{2}$. Consider $(\gamma_1, \dots, \gamma_n) \in \C^n$, where 
	\[ \gamma_{\frac{n+1}{2}} = \beta_{\frac{n-1}{2}} + \beta_{\frac{n+1}{2}}, \; \; \gamma_j= \dfrac{n+1}{n+1-j} \beta_j \; \text{ and } \; \gamma_{n+1-j}= \dfrac{n+1}{n+1-j} \beta_{n-j}\,, \]
	for $j = 1,\dots, \lt \dfrac{n}{2} \rt$.
	Then, we have
	\[ 
	2 |\gamma_{\frac{n+1}{2}}| \leq 2 \lf |\beta_{[\frac{n}{2}]}| + |\beta_{[\frac{n}{2}]}| \rf < \dfrac{n+1}{n+1 - [\frac{n+1}{2}]} {n \choose [\frac{n}{2}]} = {n+1 \choose [\frac{n+1}{2}]}
	\]
	 and  
	\[
	|\gamma_j|+ |\gamma_{n+1-j}| = \dfrac{n+1}{n+1-j} \lf |\beta_j|+ |\beta_{n-j}| \rf < \dfrac{n+1}{n+1-j} {n \choose j} = {n+1 \choose j}\,,
	\] 
	for all $j = 1,\dots, \lt \dfrac{n}{2} \rt$.
	Therefore,
	\[
	\big( \gamma_1 + \overline{\gamma}_n q, \dots, \underbrace{\gamma_j + \overline{\gamma}_{n+1-j} q}_{j \text{-th position}}, \dots, \gamma_n + \overline{\gamma}_1 q, q \big) \in \widetilde{\mathbb G}_{n+1}.
	\]
	Also 
	\[
	\gamma_{\frac{n+1}{2}} + \overline{\gamma}_{\frac{n+1}{2}} q = \beta_{[\frac{n}{2}]} + \overline{\beta}_{[\frac{n}{2}]+1} q + \beta_{[\frac{n}{2}]+1} + \overline{\beta}_{[\frac{n}{2}]} q = y_{[\frac{n}{2}]} + y_{[\frac{n}{2}]+1}
	\]
	and 
	\begin{align*}
	\gamma_j + \overline{\gamma}_{n+1-j} q &= \dfrac{n+1}{n+1-j} \lf \beta_j + \overline{\beta}_{n-j}q \rf = \dfrac{n+1}{n+1-j} y_j, \\
	\gamma_{n+1-j} + \overline{\gamma}_j q &= \dfrac{n+1}{n+1-j} \lf \beta_{n-j} + \overline{\beta}_jq \rf = \dfrac{n+1}{n+1-j} y_{n-j}\,,
	\end{align*}
	for all $j = 1,\dots, \lt \dfrac{n}{2} \rt$.
	Thus, 
	\begin{align*}
	\Big( \dfrac{n+1}{n} y_1,\dots, \dfrac{n+1}{n+1-j}y_j,& \dots, \dfrac{2(n+1)}{n+3}y_{[\frac{n}{2}]}, ( y_{[\frac{n}{2}]} + y_{[\frac{n}{2}]+1} ),\\ \underbrace{\dfrac{2(n+1)}{n+3}y_{[\frac{n}{2}]+1}}_{([\frac{n}{2}]+2)\text{-th position}}, \dots, &\dfrac{n+1}{n+1-j}y_{n-j}, \dots, \dfrac{n+1}{n} y_{n-1}, q \Big) 
	\in \widetilde{\mathbb{G}}_{n+1} .
	\end{align*}
Conversely, suppose $y^{\ast} = \lf y^{\ast}_1, \dots, y^{\ast}_n, q \rf \in \widetilde{\mathbb{G}}_{n+1}$. Then 
\[
y^{\ast}_{\frac{n+1}{2}} = y_{[\frac{n}{2}]} + y_{[\frac{n}{2}]+1}, \; \; y^{\ast}_j = \dfrac{n+1}{n+1-j}y_j, \; \text{ and } \; y^{\ast}_{n+1-j} = \dfrac{n+1}{n+1-j}y_{n-j}\,,
\]
for all $j = 1, \dots, \dfrac{n-1}{2}$. By definition, there exists $(\gamma_1, \dots, \gamma_n) \in \C^n$ such that 
\[
	y^{\ast}_j = \gamma_j + \bar{\gamma}_{n+1-j}q, \; \;  y^{\ast}_{n+1-j} = \gamma_{n+1-j} + \bar{\gamma}_j q \; \text{ and } \; |\gamma_j|+ |\gamma_{n+1-j}| < {n+1 \choose j}\,,
\]
for each $j = 1,\dots, \lt \dfrac{n+1}{2}\rt $. Consider $(\beta_1, \dots, \beta_{n-1}) \in \C^{n-1}$, where 
	\[ \beta_j= \dfrac{n+1-j}{n+1} \gamma_j , \; \beta_{n-j}= \dfrac{n+1}{n+1-j} \gamma_{n+1-j} \q \text{ for } j = 1,\dots,\frac{n-1}{2}. \]
	Then 
	\[
	|\beta_j| + |\beta_{n-j}| = \dfrac{n+1-j}{n+1} \lf |\gamma_j|+ |\gamma_{n+1-j}| \rf < \dfrac{n+1-j}{n+1} {n+1 \choose j} = \nj \,,
	\]
	for each $j = 1,\dots, \lt \dfrac{n}{2}\rt $. Therefore,
	\[
	\big(\beta_1 + \overline{\beta}_{n-1} q, \dots, \beta_j + \overline{\beta}_{n-j} q, \dots, \beta_{n-1} + \overline{\beta}_1 q, q \big) \in \Gn
	\] 
	Note that,
	\begin{align*}
	\beta_j + \overline{\beta}_{n-j} q & = \dfrac{n+1-j}{n+1} \lf \gamma_j + \bar{\gamma}_{n+1-j}q \rf = \dfrac{n+1-j}{n+1} y^{\ast}_j = y_j, \\
	\beta_{n-j} + \overline{\beta}_j q & = \dfrac{n+1-j}{n+1} \lf \gamma_{n+1-j} + \bar{\gamma}_j q \rf = \dfrac{n+1-j}{n+1} y^{\ast}_{n+1-j} = y_{n-j}
	\end{align*}
	for each $j = 1,\dots, \lt \dfrac{n}{2}\rt $. Therefore, $y = (y_1,\dots, y_{n-1} , q) \in \Gn$. The proof of $y \in \Gamn$ if and only if $\hat y \in \widetilde{\Gamma}_{n+1}$ is similar.
	
\end{proof}

\begin{thm}\label{even 2}
	Let $y = (y_1,\dots, y_{n-1} , q) \in \C^n$.
	\begin{itemize}[leftmargin=*]
	\item[$(1)$]
	If $n$ is even and $y \in \Gn \;($or, $\in \Gamn )$, then $\check y \in \widetilde{\mathbb{G}}_{n-1} \;($or, $\in \widetilde{\Gamma}_{n-1})$, where
	\begin{align*}
		\check y = 
		\Big( \dfrac{n-1}{n} y_1,  \dots, \dfrac{n-j}{n}  y_j, \dots, \dfrac{\frac{n}{2}+1}{n} y_{\frac{n}{2}-1},& \underbrace{\dfrac{\frac{n}{2}+1}{n}y_{\frac{n}{2}+1}}_{\frac{n}{2}\text{-th position}},\\ 
		\dfrac{\frac{n}{2}+2}{n}y_{\frac{n}{2}+2}, \dots, \underbrace{\dfrac{n-j}{n}y_{n-j}}_{(n-1-j)\text{-th position}}, & \dots, \dfrac{n-1}{n} y_{n-1}, q \Big).
	\end{align*}
	\item[$(2)$]
	If $n$ is odd and $y \in \Gn \;($or, $\in \Gamn)$, then $\tilde y \in \widetilde{\mathbb{G}}_{n-1} \;($or, $\in \widetilde{\Gamma}_{n-1})$, where
	\begin{align*}
	\tilde y = 
	\Big( \dfrac{n-1}{n} y_1,  \dots , \dfrac{n-j}{n} y_j &, \dots, \dfrac{n+3}{2n} y_{\frac{n-3}{2}}, \underbrace{\dfrac{n+1}{2n}\lf \dfrac{ y_{\frac{n-1}{2}} + y_{\frac{n+1}{2}}}{2} \rf}_{\frac{n-1}{2}\text{-th position}},\\ 
	\dfrac{n+3}{2n}& y_{\frac{n+3}{2}}, \dots, \underbrace{\dfrac{n-j}{n}y_{n-j}}_{(n-j-1)\text{-th position}},\dots, \dfrac{n-1}{n} y_{n-1}, q \Big).
	\end{align*}
	\end{itemize}
\end{thm}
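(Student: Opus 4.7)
The plan is to work directly from the defining description of $\widetilde{\mathbb G}_n$ given in the introduction: $y \in \Gn$ iff $q \in \D$ and there exist $\beta_1,\dots,\beta_{n-1} \in \C$ with $y_j = \beta_j + \bar\beta_{n-j} q$ and $|\beta_j|+|\beta_{n-j}| < {n \choose j}$ for $j = 1,\dots,\left[\frac{n}{2}\right]$ (with non-strict inequalities and $q \in \overline\D$ for $\widetilde{\Gamma}_n$). Given such witnessing $\beta_j$'s I will manufacture scalars $\gamma_1,\dots,\gamma_{n-2}$ that exhibit $\check y$ (resp.\ $\tilde y$) as a point of $\widetilde{\mathbb G}_{n-1}$ (resp.\ $\widetilde{\Gamma}_{n-1}$). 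All the scaling is governed by the two binomial identities ${n-1 \choose j} = \tfrac{n-j}{n}{n \choose j}$ and, for the middle of the odd-$n$ case, ${n-1 \choose (n-1)/2} = \tfrac{n+1}{2n}{n \choose (n-1)/2}$.

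For part $(1)$, because $n-1$ is odd the indices $\{1,\dots,n-2\}$ partition into pairs $(j,n-1-j)$ for $j = 1,\dots,\tfrac{n}{2}-1$, with no self-paired middle. I will set $\gamma_j = \tfrac{n-j}{n}\beta_j$ and $\gamma_{n-1-j} = \tfrac{n-j}{n}\beta_{n-j}$ for each such $j$. A direct substitution produces $\gamma_j + \bar\gamma_{n-1-j} q = \tfrac{n-j}{n}y_j$ and $\gamma_{n-1-j} + \bar\gamma_j q = \tfrac{n-j}{n}y_{n-j}$ at precisely the positions spelled out in the statement of $\check y$, while the pair-norm $|\gamma_j|+|\gamma_{n-1-j}| = \tfrac{n-j}{n}(|\beta_j|+|\beta_{n-j}|) < {n-1 \choose j}$ is delivered by the first binomial identity. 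This gives $\check y \in \widetilde{\mathbb G}_{n-1}$, and the argument is essentially routine reindexing.

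For part $(2)$ with $n$ odd, the paired indices $j = 1,\dots,\tfrac{n-3}{2}$ will be handled identically; the genuinely new piece is the self-paired middle index $\tfrac{n-1}{2}$ of $\widetilde{\mathbb G}_{n-1}$ (self-paired because $n-1$ is even). I will define $\gamma_{(n-1)/2} = \tfrac{n+1}{4n}\bigl(\beta_{(n-1)/2} + \beta_{(n+1)/2}\bigr)$, which immediately gives $\gamma_{(n-1)/2} + \bar\gamma_{(n-1)/2} q = \tfrac{n+1}{4n}(y_{(n-1)/2} + y_{(n+1)/2})$, matching the middle entry of $\tilde y$. The main obstacle is verifying the self-paired bound $2|\gamma_{(n-1)/2}| < {n-1 \choose (n-1)/2}$: here the pre-factor $\tfrac{n+1}{4n}$ (rather than the naive $\tfrac{n+1}{2n}$) is precisely what makes the triangle inequality $2|\gamma_{(n-1)/2}| \leq \tfrac{n+1}{2n}(|\beta_{(n-1)/2}|+|\beta_{(n+1)/2}|) < \tfrac{n+1}{2n}{n \choose (n-1)/2} = {n-1 \choose (n-1)/2}$ close, via the second binomial identity. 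The $\widetilde{\Gamma}_{n-1}$ versions follow from the identical construction with $<$ replaced by $\leq$ throughout and $q \in \overline{\D}$ allowed.
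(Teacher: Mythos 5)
Your argument is correct. For part $(2)$ it coincides with the paper's own proof: the same rescaling $\gamma_j = \tfrac{n-j}{n}\beta_j$, $\gamma_{n-1-j} = \tfrac{n-j}{n}\beta_{n-j}$, the same middle term $\gamma_{\frac{n-1}{2}} = \tfrac{n+1}{4n}\bigl(\beta_{\frac{n-1}{2}}+\beta_{\frac{n+1}{2}}\bigr)$, and the same bound $2|\gamma_{\frac{n-1}{2}}| < {n-1 \choose \frac{n-1}{2}}$ obtained from ${n-1\choose j} = \tfrac{n-j}{n}{n\choose j}$. For part $(1)$ the paper proceeds differently in form though not in substance: instead of rescaling the witnesses $\beta_j$, it invokes characterization $(3)$ of Theorem~\ref{char G 3} and checks the inequality ${n-1 \choose j}\bigl|\check y_j - \bar{\check{y}}_{n-1-j}\,q\bigr| + \bigl|\check y_j \check y_{n-1-j} - {n-1 \choose j}^2 q\bigr| < {n-1 \choose j}^2 - |\check y_{n-1-j}|^2$ directly, the factor $\bigl(\tfrac{n-j}{n}\bigr)^2$ coming out of both sides. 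Your route treats $(1)$ and $(2)$ uniformly from the defining $\beta$-description of $\Gn$ and transfers verbatim to the closed case (non-strict inequalities, $q \in \overline{\D}$), whereas the paper's appeal to Theorem~\ref{char G 3} in $(1)$ avoids mentioning the witnesses at all; both hinge on the single identity ${n-1 \choose j} = \tfrac{n-j}{n}{n \choose j}$, so the difference is essentially cosmetic.
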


\begin{proof}
	$(1)$. Let $n\in \mathbb N$ be even. Then, $\lt \frac{n-1}{2} \rt = \frac{n}{2}-1$. Also it is merely mentioned that $\nj = \dfrac{n}{n-j} {n-1 \choose j}$. Now suppose $y \in \Gn$. Then by Theorem \ref{char G 3}, we have for each $j \in \ls 1,\dots,\Big[ \dfrac{n}{2} \Big] \rs$
\[
{n \choose j}\left|y_j - \bar y_{n-j} q\right| + \left|y_j y_{n-j} - {n \choose j}^2 q \right| < {n \choose j}^2 -|y_{n-j}|^2 .
\] 
Consider the point
\begin{align*}
\check y &= \lf \check y_1, \dots, \check y_{n-2}, \check q \rf \\ &=\Big( \dfrac{n-1}{n} y_1,\dots, \dfrac{\frac{n}{2}+1}{n} y_{\frac{n}{2}-1}, \underbrace{\dfrac{\frac{n}{2}+1}{n}y_{\frac{n}{2}+1}}_{\frac{n}{2}\text{-th position}}, \dfrac{\frac{n}{2}+2}{n}y_{\frac{n}{2}+2},\dots, \dfrac{n-1}{n} y_{n-1}, q \Big).
\end{align*}
	Then $\check q = q$ and 
	\[
	\check y_j = \frac{n-j}{n} y_j , \q \hat y_{n-1-j} = \frac{n-j}{n} y_{n-j} \q \text{for } j = 1, \dots, \frac{n}{2}-1 .
	\]
	Note that for each $j = 1, \dots, \dfrac{n}{2}-1$, we have
	\begin{align*}
	&\left|\check y_j - \bar{\check{y}}_{n-1-j} \check q\right| = \dfrac{n-j}{n}\left|y_j - \bar y_{n-j} q\right| \\
	\text{and} \q 
	&\left|\check y_j \check y_{n-1-j} - {n-1 \choose j}^2 \check q \right| =  \lf \dfrac{n-j}{n} \rf^2 \left|y_j y_{n-j} - {n \choose j}^2 q \right|.
	\end{align*}
	Hence, for any $j = 1,\dots,\dfrac{n}{2}-1\; (= [ \frac{n-1}{2} ] ) $, we have
	\begingroup
	\allowdisplaybreaks
	\begin{align*}
		& {n-1 \choose j}\left|\check y_j - \bar{\check{y}}_{n-1-j} \check q\right| + \left|\check y_j \check y_{n-1-j} - {n-1 \choose j}^2 \check q \right|\\
%		= \, & {n-1 \choose j} \dfrac{n-j}{n}\left|y_j - \bar y_{n-j} q\right| + \lf \dfrac{n-j}{n} \rf^2 \left|y_j y_{n-j} - {n \choose j}^2 q \right|\\
%		=\, & {n-1 \choose j} \left|\check y_j - \bar{\check{y}}_{n-1-j} \check q\right| + \left|\check y_j \check y_{n-1-j} - {n-1 \choose j}^2 \check q \right|\\
%		=\, & \lf \dfrac{n-j}{n} \rf^2 \nj \left|y_j - \bar y_{n-j} q\right|  + \lf \dfrac{n-j}{n} \rf^2 \left|y_j y_{n-j} - {n \choose j}^2 q \right|\\
		=\, & \lf \dfrac{n-j}{n} \rf^2 \lt {n \choose j}\left|y_j - \bar y_{n-j} q\right| + \left|y_j y_{n-j} - {n \choose j}^2 q \right| \rt \\ 
		<\, & \lf \dfrac{n-j}{n} \rf^2 \lt {n \choose j}^2 -|y_{n-j}|^2 \rt\\
%		=\, & {n-1 \choose j}^2 - |\check y_{n-1-j}|\\
		=\, & {n-1 \choose j}^2 - |\check y_{n-1-j}|.
	\end{align*}
	\endgroup
%	Thus, for any $j \in \ls 1,\dots, \lt \frac{n-1}{2}\rt \rs$ the following inequality holds
%	\[
%	{n-1 \choose j}\left|\check y_j - \bar{\check{y}}_{n-1-j} \check q\right| + \left|\check y_j \check y_{n-1-j} - {n-1 \choose j}^2 \check q \right|
%	< {n-1 \choose j}^2 - |\check y_{n-1-j}|.
%	\]
	Therefore, by Theorem \ref{char G 3}, we conclude that $\check y \in \widetilde{\mathbb{G}}_{n-1}$. Similarly if $y \in \Gamn$, then $\check y \in \widetilde{\Gamma}_{n-1}$. \\
	
\noindent $(2)$. Suppose $n$ is odd and let $y \in \Gn$. Then $|q|<1$ and there exists $\lf \beta_1,\dots, \beta_{n-1}\rf \in \C^{n-1}$ such that  
	\[
	y_j = \beta_j + \bar{\beta}_{n-j}q, \q  y_{n-j} = \beta_{n-j} + \bar{\beta}_j q \q \text{and} \q |\beta_j|+ |\beta_{n-j}| < {n \choose j}\,,
	\]
	for each $j \in \ls 1,\dots, \lt \dfrac{n}{2}\rt \rs$. Consider the given point $\tilde y = \lf \tilde y_1, \dots \tilde y_{n-2}, \tilde q \rf \in \C^{n-1}$. Then $\tilde q = q$, 
	\[\tilde y_{\frac{n-1}{2}}= \dfrac{n+1}{2n}\lf \dfrac{ y_{\frac{n-1}{2}} + y_{\frac{n+1}{2}}}{2} \rf, \]
	\[ \tilde y_j = \dfrac{n-j}{n}y_j \q \text{and} \q \tilde y_{n-j-1} = \dfrac{n-j}{n}y_{n-j}\,, \q \text{for } j = 1,\dots, \frac{n-3}{2}.\]
%	That is,
%	\begin{align*}
%	\tilde y = 
%	\Big( \dfrac{n-1}{n} y_1,\dots,& \dfrac{n-j}{n} y_j, \dots, \dfrac{n+3}{2n} y_{\frac{n-3}{2}}, \overbrace{\dfrac{n+1}{2n}\lf \dfrac{ y_{\frac{n-1}{2}} + y_{\frac{n+1}{2}}}{2} \rf}^{\frac{n-1}{2}\text{-th position}},\\ 
%	&\dfrac{n+3}{2n} y_{\frac{n+3}{2}}, \dots, \underbrace{\dfrac{n-j}{n}y_{n-j}}_{(n-j-1)\text{-th position}},\dots, \dfrac{n-1}{n} y_{n-1}, q \Big) .
%	\end{align*}
%	Since $n$ is odd, $\lt \frac{n}{2}\rt = \frac{n-1}{2} =\lt \frac{n-1}{2} \rt $. 
    Consider $\lf \gamma_1, \dots, \gamma_{n-2} \rf \in \C^{n-2}$, where  $\gamma_{\frac{n-1}{2}} = \dfrac{n+1}{2n} \lf \dfrac{\beta_{\frac{n-1}{2}} + \beta_{\frac{n+1}{2}} }{2} \rf $,
	\[\gamma_j= \dfrac{n-j}{n} \beta_j \q \text{and} \q \gamma_{n-1-j}= \dfrac{n-j}{n} \beta_{n-j}\,, \q \text{for } j = 1,\dots, \frac{n-3}{2}.\]
	Then, we have
	\[
	2\left| \gamma_{\frac{n-1}{2}} \right| = \dfrac{n+1}{2n} \left|\beta_{\frac{n-1}{2}} + \beta_{\frac{n+1}{2}} \right|
	< \dfrac{n- \frac{n-1}{2}}{n}{n \choose {\frac{n-1}{2}}} ={n-1 \choose \frac{n-1}{2}} 
	\]
	\[\text{and} \q
	|\gamma_j|+ |\gamma_{n-1-j}| = \dfrac{n-j}{n} \lf |\beta_j|+ |\beta_{n-j}| \rf < \dfrac{n-j}{n} {n \choose j} = {n-1 \choose j}\,,
\]
for $j = 1,\dots, \dfrac{n-3}{2}$. Therefore,
\[
\big( \gamma_1 + \overline{\gamma}_{n-2} q, \dots, \underbrace{\gamma_j + \overline{\gamma}_{n+1-j} q}_{j \text{-th position}}, \dots, \gamma_{n-2} + \overline{\gamma}_1 q, q \big) \in \widetilde{\mathbb G}_{n-1}.
\]	
Also we have,
\begin{align*}
\gamma_{\frac{n-1}{2}} + \bar{\gamma}_{\frac{n-1}{2}}  q &= \dfrac{n+1}{2n} \lf \dfrac{\beta_{\frac{n-1}{2}} + \overline{\beta}_{\frac{n+1}{2}} q + \beta_{\frac{n+1}{2}} + \overline{\beta}_{\frac{n-1}{2}} q  }{2} \rf \\ 
&= \dfrac{n+1}{2n} \dfrac{\lf y_{\frac{n-1}{2}} + y_{\frac{n+1}{2}} \rf}{2} = \tilde y_{\frac{n-1}{2}},
\end{align*} 
\begin{align*} 
&  \gamma_j + \bar{\gamma}_{n-1-j}q = \dfrac{n-j}{n} \lf \beta_j + \bar{\beta}_{n-j}q \rf = \dfrac{n-j}{n}y_j = \tilde y_j\\  
\text{and} \q &  \gamma_{n-1-j} + \bar{\gamma}_j q = \dfrac{n-j}{n} \lf \beta_{n-j} + \bar{\beta}_j q \rf = \dfrac{n-j}{n}y_{n-j} = \tilde y_{n-j-1}\,,
\end{align*}	
for $j = 1,\dots, \dfrac{n-3}{2}$. Hence, $|\tilde q|< 1$ and there exists $\lf \gamma_1, \dots, \gamma_{n-2} \rf \in \C^{n-2}$ such that
	\[
	\tilde y_j = \gamma_j + \bar{\gamma}_{n-1-j} \tilde q , \q  \tilde y_{n-j-1} = \gamma_{n-1-j} + \bar{\gamma}_j \tilde q \q \text{and} \q |\gamma_j|+ |\gamma_{n-1-j}| < {n-1 \choose j}\,,
	\]
	for each $j = 1,\dots, \dfrac{n-1}{2} $. Consequently $\tilde y = \lf \tilde y_1, \dots \tilde y_{n-2}, \tilde q \rf \in \widetilde{\mathbb{G}}_{n-1}$. The proof of $y \in \Gamn$ implies $\tilde y \in \widetilde{\Gamma}_{n-1}$ is similar.
\end{proof}

\begin{thm}\label{even 3}
	Let $n \in \mathbb N$ be even.
	%	\begin{enumerate}
	\item[$(1)$]
	Let the point $y = (y_1,\dots, y_{n-1} , q) \in \Gn \;($or, $\in \Gamn)$. Then the point 
	\[ \underline y = \big( y_1,\dots, y_{\frac{n}{2}}, y_{\frac{n}{2}},\underbrace{ y_{\frac{n}{2} + 1}}_{(\frac{n}{2} + 2)-th}, \dots, y_{n-1}, q \big) \in \widetilde{\mathbb G}_{n+1} \;(\text{or}, \in \widetilde{\Gamma}_{n+1} ) \]
	and the map $f : \Gn \rightarrow \widetilde{\mathbb G}_{n+1}$ that maps $y$ to $\underline y$ is an analytic embedding.\\
	\item[$(2)$]
	Let $y = (y_1,\dots, y_n , q) \in \widetilde{\mathbb G}_{n+1}\;($or, $\in \widetilde{\Gamma}_{n+1})$. Then the point $\tilde y \in \Gn \;($or, $\in \Gamn)$, where
	\begin{align*}
		\tilde y = \Bigg( \frac{n}{n+1}y_1,  \dots, & \frac{n+1-j}{n+1} y_j, \dots, \frac{\frac{n}{2} +2}{n+1} y_{\frac{n}{2} - 1}, \frac{\frac{n}{2}+1}{2(n+1)}\lf y_{\frac{n}{2}}+ y_{\frac{n}{2} +1} \rf,\\
		&\underbrace{\frac{\frac{n}{2} +2}{n+1} y_{\frac{n}{2} +2}}_{(\frac{n}{2} + 1)-\text{th}}, \dots, \frac{n+1-j}{n+1} y_{n+1-j}, \dots, \frac{n}{n+1}y_n, q \Bigg).
	\end{align*}
	The map $g : \widetilde{\mathbb G}_{n+1} \rightarrow \Gn$ that maps $y$ to $\tilde y$ is analytic.\\
	\item[$(3)$]
	Let $y = (y_1,\dots, y_n , q) \in \widetilde{\mathbb G}_{n+1}\;($or, $\in \widetilde{\Gamma}_{n+1})$. Then the point 
	\[ \hat y = \lf \dfrac{y_1}{{n+1 \choose 1}}, \dots, \dfrac{y_j}{{n+1 \choose j}}, \dots, \dfrac{y_n}{{n+1 \choose 1}},q \rf \in  \widetilde{\mathbb G}_{n+1}  \;(\text{or}, \in \widetilde{\Gamma}_{n+1} )\]
	and the point 
	\[
	\hat y_{\sharp} = \Big( \hat y_1, \dots, \hat y_{\frac{n}{2}-1}, \underbrace{\dfrac{\hat y_{\frac{n}{2}} + \hat y_{\frac{n}{2}+1} }{2}}_{\frac{n}{2}-\text{th}}, \hat y_{\frac{n}{2} +2}, \dots, \hat y_n, q \Big) \in \Gn \;(\text{or, } \in \Gamn) \,,
	\]
where $\hat y_j = \dfrac{y_j}{{n+1 \choose j}}$ for each $j$. Also
the function $h : \widetilde{\mathbb G}_{n+1} \rightarrow \Gn$ that maps $y$ to $\hat y_{\sharp}$ is analytic.
	%	\end{enumerate}
\end{thm}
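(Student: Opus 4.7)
The unifying strategy will be to verify membership via condition $(4)$ of Theorem \ref{char G 3}, which characterizes $\widetilde{\mathbb G}_m$ as the set of $(y_1,\ldots,y_{m-1},q) \in \C^m$ with $|q|<1$ satisfying $|y_{m-j} - \bar y_j q| + |y_j - \bar y_{m-j} q| < {m \choose j}(1-|q|^2)$ for $j = 1,\ldots,[m/2]$. Since in each of the three parts the map in question is linear in $y$, analyticity will be automatic; the closure statements will follow by continuity, or equivalently by the $\leq$-version of $(4)$.

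For part $(1)$, I would first unfold the coordinates of $\underline y$ to see that $\underline y_j = y_j$ for $j = 1,\ldots,n/2$ and $\underline y_{n+1-j} = y_{n-j}$ for these same $j$ (the case $j = n/2$ using the inserted duplicate $y_{n/2}$). Then for each $j = 1,\ldots,n/2 = [(n+1)/2]$, the left hand side of $(4)$ for $\underline y$ is identical to that for $y$, and since $y \in \Gn$ bounds it by ${n \choose j}(1-|q|^2) \leq {n+1 \choose j}(1-|q|^2)$, the membership $\underline y \in \widetilde{\mathbb G}_{n+1}$ follows. The map $f$ is linear and injective with a linear coordinate-projection one-sided inverse, so it is an analytic embedding.

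For part $(2)$, the key observation is that $n$ even makes $n+1$ odd, so Theorem \ref{even 2}$(2)$ applies directly to $\widetilde{\mathbb G}_{n+1}$ and produces a point in $\widetilde{\mathbb G}_n = \Gn$. Rewriting the formulas of that theorem with $m = n+1$ in place of $n$, the identifications $\tfrac{m-j}{m}=\tfrac{n+1-j}{n+1}$ and $\tfrac{m+1}{4m} = \tfrac{n/2+1}{2(n+1)}$ will recover exactly the displayed expression for $\tilde y$, and linearity of $g$ gives analyticity.

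For part $(3)$, I would first establish $\hat y \in \widetilde{\mathbb G}_{n+1}$ by noting that $\hat y_j = y_j/{n+1 \choose j}$ and ${n+1 \choose j} = {n+1 \choose n+1-j}$, so the left hand side of $(4)$ for $\hat y$ equals ${n+1 \choose j}^{-1}$ times that for $y$, which is therefore $< 1-|q|^2 < {n+1 \choose j}(1-|q|^2)$. For $\hat y_\sharp \in \Gn$, the inequalities of $(4)$ for $j < n/2$ will be immediate as in part $(1)$, while for $j = n/2$ the middle coordinate is the average $\tfrac12(\hat y_{n/2} + \hat y_{n/2+1})$, and the triangle inequality will yield
\[
2\bigl|(\hat y_\sharp)_{n/2} - \overline{(\hat y_\sharp)_{n/2}}\,q\bigr| \leq |\hat y_{n/2} - \bar{\hat y}_{n/2+1}q| + |\hat y_{n/2+1} - \bar{\hat y}_{n/2}q| < 1-|q|^2 \leq {n \choose n/2}(1-|q|^2),
\]
the middle inequality being the $\widetilde{\mathbb G}_{n+1}$-bound for $\hat y$ at $j = n/2$. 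Analyticity of $h$ is again immediate from linearity. The only mildly delicate point in the whole argument is this last triangle-inequality step, which is precisely why the averaged coordinate is chosen rather than just $\hat y_{n/2}$ or $\hat y_{n/2+1}$ alone.
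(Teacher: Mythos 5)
Your argument is correct, but it takes a route that differs from the paper's in two respects. You certify all memberships through the ``distance-type'' inequality, condition $(4)$ of Theorem \ref{char G 3} (applied with $n+1$, $n$ in place of $n$), together with the monotonicity ${n \choose j}\leq {n+1 \choose j}$, the scaling identity ${n+1 \choose j}={n+1 \choose n+1-j}$ for part $(3)$, and the triangle inequality for the averaged middle coordinate; and you obtain part $(2)$ outright as an instance of Theorem \ref{even 2}$(2)$ with $n+1$ (odd) in place of $n$ — your coefficient identifications $\tfrac{m-j}{m}=\tfrac{n+1-j}{n+1}$ and $\tfrac{m+1}{4m}=\tfrac{n/2+1}{2(n+1)}$ do match the displayed formula for $\tilde y$, and your check of the $j=n/2$ case in parts $(1)$ and $(3)$ (where $2|y_{n/2}-\bar y_{n/2}q|$ appears) is the only place where care is really needed and you handle it correctly. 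The paper instead proves all three parts by going back to the defining parametrization: it takes the $\beta$'s with $y_j=\beta_j+\bar\beta_{n-j}q$, $|\beta_j|+|\beta_{n-j}|<{n \choose j}$, builds new parameters $\gamma_j$ (respectively $\alpha_j$) explicitly, and verifies the required bounds ${n+1 \choose j}$ or ${n \choose j}$ directly; this is longer but also exhibits the parameter data of the image points, whereas your version is shorter and reuses condition $(4)$ and the previously proved Theorem \ref{even 2}. One caution: your parenthetical ``or equivalently by the $\leq$-version of $(4)$'' for the closure statements is not justified by anything quoted in the paper — the non-strict inequality alone is not known to characterize $\widetilde{\Gamma}_m$ (already for $\Gamma_2$ the analogous non-strict inequality needs a supplementary bound) — so the closure claims should rest on the continuity argument you also give (the maps are linear, $\widetilde{\Gamma}_m$ is by definition the closure of $\widetilde{\mathbb G}_m$, and $f(\overline{A})\subseteq\overline{f(A)}$), or on the non-strict $\beta$-description of $\widetilde{\Gamma}_m$ as in the paper; with that understanding your proof is complete.
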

\begin{proof}
	
	%\begin{enumerate}
	\item[$(1)$].
	Since $y = (y_1,\dots, y_{n-1} , q) \in \Gn$, there exists a unique $\lf \beta_1, \dots, \beta_{n-1} \rf \in \C^{n-1}$ such that 
	\[y_j = \beta_j + \bar{\beta}_{n-j} q, \q y_{n-j} = \beta_{n-j} + \bar{\beta}_j q \q \text{and} \q |\beta_j| + |\beta_{n-j}|< \nj \,, \]
	for each $j = 1, \dots, \frac{n}{2}$. Note that $ y_{\frac{n}{2}} = \beta_{\frac{n}{2}} + \bar{\beta}_{\frac{n}{2}} q $.	
	Consider the given point $\underline y = \lf \underline y_1, \dots, \underline y_n, \underline q \rf \in \C^{n+1}$. Then $\underline q =q$,
	\[
	\underline y_j = y_j \q \text{and} \q 	\underline y_{n+1-j} = y_{n-j} \q \text{for } j = 1, \dots, \frac{n}{2}.
	\]
	Define $ \gamma_j = \beta_j$ and $\gamma_{n+1-j} = \beta_{n-j}$ for $j = 1,\dots, \frac{n}{2}$. Then,
	\[
	\gamma_1 = \beta_1, \dots, \gamma_{\frac{n}{2}}= \beta_{\frac{n}{2}},\; \gamma_{\frac{n}{2}+1}= \beta_{\frac{n}{2}}, \; \gamma_{\frac{n}{2}+2}= \beta_{\frac{n}{2}+1}, \dots, \gamma_n = \beta_{n-1}.
	\]
	Evidently,
	\[|\gamma_j|+ |\gamma_{n+1-j}| = |\beta_j| + |\beta_{n-j}| < \nj \leq {n+1 \choose j}\,, \q \text{for }\; j = 1,\dots, \frac{n}{2}.\] 
	For $j = 1, \dots, \frac{n}{2}$, we have
	\[\gamma_j + \bar{\gamma}_{n+1-j} \underline q = y_j =\underline y_j   \q \text{and} \q \gamma_{n+1-j} + \bar{\gamma}_j \underline q = y_{n-j} = \underline y_{n+1-j}   \;\; . \]
	Hence, there exists $\lf \gamma_1, \dots, \gamma_n \rf \in \C^n$ such that 
	\[\underline y_j = \gamma_j + \bar{\gamma}_{n+1-j} q, \; \; \underline y_{n+1-j} = \gamma_{n+1-j} + \bar{\gamma}_j q \; \text{ and } \; |\gamma_j| + |\gamma_{n+1-j}|< {n+1 \choose j} ,\]
	for each $j = 1, \dots, \lt \frac{n+1}{2} \rt$.
	Hence $y \in \widetilde{\mathbb G}_{n+1} $. The map $f$ is clearly an analytic embedding of $\Gn$ into $\widetilde{\mathbb G}_{n+1}$.\\
	
	\item[$(2)$]			
	Let $y = (y_1,\dots, y_n , q) \in \widetilde{\mathbb G}_{n+1}$. Then, there exists $\lf \beta_1, \dots, \beta_n \rf \in \C^n$ such that 
	\[
	y_j = \beta_j + \bar{\beta}_{n+1-j} q, \;\;  y_{n+1-j} = \beta_{n+1-j} + \bar{\beta}_j q \; \text{ and }\; |\beta_j| + |\beta_{n+1-j}|< {n+1 \choose j}\,,
	\]
	for $j = 1, \dots, n$. Consider the point 
	$ \tilde y = \lf \tilde y_1, \dots,\tilde y_{n-1}, \tilde q \rf \in \C^n$, where $\tilde q =q$,
	$\tilde y_{\frac{n}{2}}=  \dfrac{\frac{n}{2}+1}{2(n+1)}  \lf y_{\frac{n}{2}} + y_{\frac{n}{2} + 1} \rf$  and
	\[\tilde y_j = \dfrac{n+1-j}{n+1} y_j, \;\;
	\tilde y_{n-j} = \dfrac{n+1-j}{n+1} y_{n+1-j}\, \;\; \text{ for } j = 1, \dots, \frac{n}{2}-1.\]	
	Define $\lf \gamma_1, \dots, \gamma_{n-1}\rf \in \C^{n-1}$ as follows  
	\[ \gamma_j = \dfrac{n+1-j}{n+1} \beta_j, \q  \gamma_{n-j}= \dfrac{n+1-j}{n+1} \beta_{n+1-j}\, \q \text{for } j = 1,\dots, \frac{n}{2}-1 \]
	\[ \text{and} \q \gamma_{\frac{n}{2}}= \dfrac{n+1- \frac{n}{2}}{2(n+1)}\lf \beta_{\frac{n}{2}} + \beta_{\frac{n}{2}+1} \rf = \dfrac{n+2}{4(n+1)}\lf \beta_{\frac{n}{2}} + \beta_{\frac{n}{2}+1} \rf. \]
	Then for $j = 1,\dots, \dfrac{n}{2}-1$, we have 
	\[ 
	|\gamma_j| + |\gamma_{n-j}| = \dfrac{n+1-j}{n+1} \lf|\beta_j| + |\beta_{n+1-j}| \rf < \dfrac{n+1-j}{n+1} {n+1 \choose j} = \nj 
	\]
	\[
	\text{and }\q 2|\gamma_{\frac{n}{2}}| \leq \dfrac{n+2}{2(n+1)}\lf |\beta_{\frac{n}{2}}| + |\beta_{\frac{n}{2}+1}| \rf < \dfrac{n+1- \frac{n}{2}}{(n+1)} {n+1 \choose \frac{n}{2}} = {n \choose \frac{n}{2}} .
	\]
	Therefore, $\lf \gamma_1, \dots, \gamma_{n-1}\rf \in \C^{n-1}$ where $|\gamma_j| + |\gamma_{n-j}| < \nj $, for all $j = 1, \dots, \frac{n}{2}$. Also
	\begin{align*}
		\tilde y_j &=  \dfrac{n+1-j}{n+1} y_j =  \dfrac{n+1-j}{n+1} \lf \beta_j + \bar{\beta}_{n+1-j} q \rf = \gamma_j + \bar{\gamma}_{n-j} \tilde q, \\
		\tilde y_{n-j} &=  \dfrac{n+1-j}{n+1}y_{n+1-j} =  \dfrac{n+1-j}{n+1} \lf \beta_{n+1-j} + \bar{\beta}_j q \rf = \gamma_{n-j} + \bar{\gamma}_j \tilde q \,,
	\end{align*}
	for $j = 1, \dots, \frac{n}{2}-1 $. Since $y_{\frac{n}{2}} = \beta_{\frac{n}{2}} + \bar{\beta}_{\frac{n}{2}+1} q \;$ and  $ \; y_{\frac{n}{2}+1} = \beta_{\frac{n}{2}+1} + \bar{\beta}_{\frac{n}{2}} q$, we have that
	\begin{align*}
		\tilde y_{\frac{n}{2}} &= \dfrac{n+2}{4(n+1)} \lf y_{\frac{n}{2}} + y_{\frac{n}{2} + 1} \rf \\
		&= \dfrac{n+2}{4(n+1)} \lt \lf \beta_{\frac{n}{2}} + \beta_{\frac{n}{2}+1} \rf + \lf \bar{\beta}_{\frac{n}{2}} + \bar{\beta}_{\frac{n}{2}+1} \rf q \rt \\
		&= \gamma_{\frac{n}{2}} + \bar{\gamma}_{\frac{n}{2}} \tilde q.
	\end{align*}
	Thus, there exists $\lf \gamma_1, \dots, \gamma_{n-1}\rf \in \C^{n-1}$ such that 
	\[\tilde y_j = \gamma_j + \bar{\gamma}_{n-j} \tilde q , \q \tilde y_{n-j} = \gamma_{n-j} + \bar{\gamma}_j \tilde q \q \text{and} \q |\gamma_j| + |\gamma_{n-j}| < \nj \,,
	\] 
	for all $j = 1, \dots,\frac{n}{2} $. Hence $\tilde y \in \Gn$. The map $g$ is obviously analytic.\\
	
	\item[$(3)$]
	Let $y = (y_1,\dots, y_n , q) \in \widetilde{\mathbb G}_{n+1}$. Then there exists $\lf \beta_1, \dots, \beta_n \rf \in \C^n$ such that 
	\[
	y_j = \beta_j + \bar{\beta}_{n+1-j} q, \;\;  y_{n+1-j} = \beta_{n+1-j} + \bar{\beta}_j q \; \text{ and } |\beta_j| + |\beta_{n+1-j}|< {n+1 \choose j}\,,
	\]
	for $j = 1, \dots,n$. Consider the point $ \hat y = \lf \hat y_1, \dots, \hat y_n, \hat q \rf $, where $\hat q= q$, and\\
	$\hat y_j = \dfrac{y_j}{{n+1 \choose j}}$ for all $j = 1, \dots, n$. Let $\alpha_j = \dfrac{\beta_j}{{n+1 \choose j}}$ for $j = 1, \dots, n$. Then 
	\[|\alpha_j| + |\alpha_{n+1-j}| = \dfrac{|\beta_j| + |\beta_{n+1-j}|}{{n+1 \choose j}} < 1 \leq {n+1 \choose j}.\] 
	Also we have
	\[
	\hat y_j = \dfrac{y_j}{{n+1 \choose j}} = \alpha_j + \bar{\alpha}_{n+1-j} q, \q  \hat y_{n+1-j}= \dfrac{y_{n+1-j}}{{n+1 \choose j}}= \alpha_{n+1-j} + \bar{\alpha}_j q \,,
	\]
	for $j = 1, \dots, n$. Thus, $\hat y \in  \widetilde{\mathbb G}_{n+1}$.	Next define  $\lf \gamma_1, \dots, \gamma_{n-1}\rf \in \C^{n-1}$ in the following way:
	\[\gamma_{\frac{n}{2}}= \dfrac{\lf \alpha_{\frac{n}{2}} + \alpha_{\frac{n}{2}+1} \rf}{2}, \q \gamma_j = \alpha_j \q \text{and} \q \gamma_{n-j}= \alpha_{n+1-j} \]
	for $j = 1,\dots, \frac{n}{2}-1$.
	Then, we have \[
	2|\gamma_{\frac{n}{2}}| \leq  |\beta_{\frac{n}{2}}| + |\beta_{\frac{n}{2}+1}|  < 1 \leq {n \choose \frac{n}{2}} 
	\]
	\[ 
	\text{and }\q |\gamma_j| + |\gamma_{n-j}| = |\alpha_j| + |\alpha_{n+1-j}|  < 1 \leq \nj \q \text{for } j = 1,\dots, \frac{n}{2}-1.
	\]		
	Therefore, $\lf \gamma_1, \dots, \gamma_{n-1}\rf \in \C^{n-1}$ where $|\gamma_j| + |\gamma_{n-j}| < \nj $, for all $j = 1, \dots, \frac{n}{2}$. Note that,
	\begin{align*}
		\hat y_j & = \gamma_j + \bar{\gamma}_{n-j} q \\
		\hat y_{n-j} & = \gamma_{n-j} + \bar{\gamma}_j q \q \text{for } j = 1, \dots, \frac{n}{2}-1 .
	\end{align*}
	Since $\hat y_{\frac{n}{2}} = \alpha_{\frac{n}{2}} + \bar{\alpha}_{\frac{n}{2}+1} q \;$ and  $ \;\hat y_{\frac{n}{2}+1} = \alpha_{\frac{n}{2}+1} + \bar{\alpha}_{\frac{n}{2}} q$, we also have
	\begin{align*}
		\dfrac{\hat y_{\frac{n}{2}} + \hat y_{\frac{n}{2} + 1}}{2} 
		&= \dfrac{\lf \alpha_{\frac{n}{2}} + \alpha_{\frac{n}{2}+1} \rf}{2} + \dfrac{\lf \bar{\alpha}_{\frac{n}{2}} + \bar{\alpha}_{\frac{n}{2}+1} \rf}{2} q \\
		&= \gamma_{\frac{n}{2}} + \bar{\gamma}_{\frac{n}{2}} q.
	\end{align*}
	Hence, the point
	\[
	\hat y_{\sharp} = \lf \hat y_1, \dots, \hat y_{\frac{n}{2}-1}, \dfrac{\hat y_{\frac{n}{2}} + \hat y_{\frac{n}{2}+1} }{2}, \hat y_{\frac{n}{2} +2}, \dots, \hat y_n, q \rf \in \Gn .
	\]
	Clearly, the map $h$ is analytic.
	%	\end{enumerate}
\end{proof}

\begin{thm}\label{odd 3}
	Let $n \in \mathbb N$ be odd.
	%	\begin{enumerate}
	\item[$(1)$]
	Let the point $y = (y_1,\dots, y_{n-1} , q) \in \Gn \,($or, $\in \Gamn)$. Then the point 
	\[ \underline y = \Big( y_1,\dots, y_{[\frac{n}{2}]}, \df{y_{[\frac{n}{2}]}+ y_{[\frac{n}{2}] + 1}}{2},\underbrace{ y_{\frac{n}{2} + 1}}_{(\frac{n}{2} + 2)-th}, \dots, y_{n-1}, q \Big) \in \widetilde{\mathbb G}_{n+1} \; (\text{or}, \in \widetilde{\Gamma}_{n+1} ) \]
	and the map $f : \Gn \rightarrow \widetilde{\mathbb G}_{n+1}$ that maps $y$ to $\underline y$ is analytic.\\
	\item[$(2)$]
	Let $y = (y_1,\dots, y_n , q) \in \widetilde{\mathbb G}_{n+1}\;($or, $\in \widetilde{\Gamma}_{n+1})$. Then the point $\tilde y \in \Gn \;($or, $\in \Gamn)$, where
	\begin{align*}
	\tilde y = \Bigg( \frac{n}{n+1}y_1, & \dots,  \frac{n+1-j}{n+1} y_j, \dots, %\frac{n+5}{2(n+1)} y_{[\frac{n}{2}] - 1},
	  \frac{n+3}{2(n+1)}y_{[\frac{n}{2}]},\\
	&\underbrace{\frac{n+3}{2(n+1)} y_{[\frac{n}{2}] +2}}_{([\frac{n}{2}] + 1)-\text{th}}, \dots, \frac{n+1-j}{n+1} y_{n+1-j}, \dots, \frac{n}{n+1}y_n, q \Bigg).
	\end{align*}
	The map $g : \widetilde{\mathbb G}_{n+1} \rightarrow \Gn$ that maps $y$ to $\tilde y$ is an analytic embedding.\\
	\item[$(3)$]
	Let $y = (y_1,\dots, y_n , q) \in \widetilde{\mathbb G}_{n+1}\;($or, $\in \widetilde{\Gamma}_{n+1})$. Then the point 
	\[ \hat y = \lf \dfrac{y_1}{{n+1 \choose 1}}, \dots, \dfrac{y_j}{{n+1 \choose j}}, \dots, \dfrac{y_n}{{n+1 \choose 1}},q \rf \in  \widetilde{\mathbb G}_{n+1} \;(\text{or}, \in \widetilde{\Gamma}_{n+1} )\]
	and the point 
	\[
	\hat y_{\sharp} = \Big( \hat y_1, \dots, \hat y_{[\frac{n}{2}]-1}, \underbrace{\hat y_{[\frac{n}{2}]} }_{\frac{n}{2}-\text{th}}, \hat y_{[\frac{n}{2}] +2}, \dots, \hat y_n, q \Big) \in \Gn(\text{or, } \in \Gamn)\,,
	\]
where $\hat y_j = \dfrac{y_j}{{n+1 \choose j}}$. The map $h : \widetilde{\mathbb G}_{n+1} \rightarrow \Gn$ that maps $y$ to $\hat y_{\sharp}$ is an analytic embedding.
	%	\end{enumerate}	
\end{thm}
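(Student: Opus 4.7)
The plan is to imitate the proof of Theorem \ref{even 3} part-by-part, using the $\beta$-representations intrinsic to $\Gn$ and $\widetilde{\mathbb G}_{n+1}$ and exchanging the roles of the two parities. Since $n$ is odd, $n+1$ is even, so now it is $\widetilde{\mathbb G}_{n+1}$ that carries the self-paired middle coordinate $y_{(n+1)/2}=\beta_{(n+1)/2}+\bar\beta_{(n+1)/2}q$, while $\Gn$ (having $n-1$ non-$q$ coordinates, an even number) consists only of free $\beta$-pairs. Throughout, the key identities are ${n+1 \choose j}=\tfrac{n+1}{n+1-j}{n \choose j}\ge {n \choose j}$ and ${n+1 \choose j}={n+1 \choose n+1-j}$, together with $[n/2]=(n-1)/2$.

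For part (1), start with the representation $y_j=\beta_j+\bar\beta_{n-j}q$, $|\beta_j|+|\beta_{n-j}|<{n \choose j}$, coming from $y\in\Gn$, and define $\gamma\in\C^n$ by $\gamma_j=\beta_j$ and $\gamma_{n+1-j}=\beta_{n-j}$ for $j=1,\dots,[n/2]$, filling the new self-paired slot with $\gamma_{(n+1)/2}=\tfrac12(\beta_{[n/2]}+\beta_{[n/2]+1})$. A direct pair-by-pair computation yields $\underline y_k=\gamma_k+\bar\gamma_{n+1-k}q$ for every $k$, and at the middle the symmetric combination collapses to $\tfrac12(y_{[n/2]}+y_{[n/2]+1})$ as required. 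The size bounds follow at once from ${n \choose j}\le{n+1 \choose j}$ together with the middle estimate $2|\gamma_{(n+1)/2}|\le|\beta_{[n/2]}|+|\beta_{[n/2]+1}|<{n \choose [n/2]}=\tfrac12{n+1 \choose (n+1)/2}$.

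Part (2) is the dual construction: from $y_j=\beta_j+\bar\beta_{n+1-j}q$ in $\widetilde{\mathbb G}_{n+1}$, set $\gamma_j=\tfrac{n+1-j}{n+1}\beta_j$ and $\gamma_{n-j}=\tfrac{n+1-j}{n+1}\beta_{n+1-j}$ for $j=1,\dots,(n-1)/2$; the bound $|\gamma_j|+|\gamma_{n-j}|<{n \choose j}$ is then immediate via the binomial identity above. The bookkeeping subtlety is that the two middle target coordinates $\tilde y_{[n/2]}$ and $\tilde y_{[n/2]+1}$ come from scaling $y_{[n/2]}$ and $y_{[n/2]+2}$ of the source — whose $\beta$-indices $(n-1)/2$ and $(n+3)/2$ are mirror images under $k\mapsto n+1-k$, hence form a genuine $\beta$-pair in $\widetilde{\mathbb G}_{n+1}$ — while the self-paired source coordinate $y_{(n+1)/2}$ is simply discarded. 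For part (3), rescale $\alpha_j=\beta_j/{n+1 \choose j}$; using ${n+1 \choose j}={n+1 \choose n+1-j}$ one obtains $\hat y_j=\alpha_j+\bar\alpha_{n+1-j}q$ with $|\alpha_j|+|\alpha_{n+1-j}|<1$, so $\hat y\in\widetilde{\mathbb G}_{n+1}$. The point $\hat y_\sharp$ is obtained by deleting the self-paired middle entry, and its $\gamma$-representation in $\Gn$ is $\gamma_k=\alpha_k$ for $k\le(n-1)/2$ and $\gamma_k=\alpha_{k+1}$ for $k\ge(n+1)/2$ — the two rules agree on the overlap by the palindrome symmetry of the $\alpha$'s — giving $|\gamma_k|+|\gamma_{n-k}|<1\le{n \choose k}$, as needed.

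Analyticity of $f,g,h$ is clear as each is complex-linear in the coordinates, and the $\widetilde{\Gamma}$-closure statements follow from the identical constructions with strict inequalities relaxed to non-strict ones. The main obstacle is purely combinatorial: keeping track of indices across the shift from $\widetilde{\mathbb G}_{n+1}$ to $\Gn$, and correctly identifying — among coordinates of $\widetilde{\mathbb G}_{n+1}$ — the genuine $\beta$-pairs versus the self-paired middle coordinate $y_{(n+1)/2}$.
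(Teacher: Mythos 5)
Your proposal is correct and is essentially the proof the paper intends: the paper's own proof of this theorem is simply the remark that it is "similar to Theorem \ref{even 3}", and your argument is exactly that adaptation with the parity roles exchanged — the self-paired middle $\beta$ now sits in $\widetilde{\mathbb G}_{n+1}$ rather than in $\Gn$, and the verifications rest on the same identities ${n+1 \choose j}=\frac{n+1}{n+1-j}{n \choose j}$, ${n+1 \choose j}={n+1 \choose n+1-j}$ and the same pair-by-pair bookkeeping (your aside about "agreement on the overlap" in part (3) is vacuous since the two index ranges are disjoint, but harmless). Note that, exactly like the paper's treatment of $g$ and $h$ in the even case, you establish only analyticity and not the statement's stronger wording that $g$ and $h$ are embeddings (which cannot hold, since a domain in $\C^{n+1}$ admits no continuous injection into $\C^{n}$); this is a defect of the theorem's phrasing rather than of your proof.
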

\begin{proof}
	The proof is similar to that of Theorem \ref{even 3}.
\end{proof}

\vspace{0.60cm}

\end{document}